\theoremstyle{plain}
\newtheorem{thm}{Theorem}[section]
\newtheorem*{thm*}{Theorem}
\newtheorem*{cor*}{Corollary}
\newtheorem{prop}[thm]{Proposition}
\newtheorem{lem}[thm]{Lemma}
\newtheorem{cor}[thm]{Corollary}
\newtheorem{claim}{Claim}
\newtheorem*{claim*}{Claim}
\theoremstyle{definition}
\newtheorem{defn}[thm]{Definition}
\newtheorem{ex}[thm]{Example}
\newtheorem{rem}[thm]{Remark}
\newtheorem{fact}[thm]{Fact}
\theoremstyle{remark}
\numberwithin{equation}{thm}
\def\Hom{\mathrm{Hom}}
\def\Ext{\mathrm{Ext}}
\def\Proj{\operatorname{Proj}}
\def\Im{\mathrm{Im}}
\def\a{\mathfrak a}
\def\e{\mathrm{e}}
\def\m{\mathfrak m}
\def\n{\mathfrak n}
\def\p{\mathfrak p}
\def\Z{\Bbb Z}
\def\H{\mathrm{H}}
\newcommand{\rme}{\mathrm{e}}
\newcommand{\rmE}{\mathrm{E}}
\newcommand{\rmH}{\mathrm{H}}
\newcommand{\rmK}{\mathrm{K}}
\newcommand{\rmR}{\mathrm{R}}
\newcommand{\rmT}{\mathrm{T}}
\newcommand{\rmU}{\mathrm{U}}
\newcommand{\calF}{\mathcal{F}}
\newcommand{\fkm}{\mathfrak{m}}
\newcommand{\fkn}{\mathfrak{n}}
\def\depth{\mathrm{depth}}
\def\Supp{\mathrm{Supp}}
\def\Ass{\mathrm{Ass}}
\def\Assh{\mathrm{Assh}}
\def\Spec{\mathrm{Spec}}
\def\hdeg{\operatorname{hdeg}}
\def\gr{\mbox{\rm gr}}
\begin{document}

\setlength{\baselineskip}{15pt}
\title{The first Euler characteristics versus  \\
the homological degrees}
\pagestyle{plain}
\author{Shiro Goto}
\address{Department of Mathematics, School of Science and Technology, Meiji University, 1-1-1 Higashi-mita, Tama-ku, Kawasaki 214-8571, Japan}
\email{goto@math.meiji.ac.jp}
\author{Kazuho Ozeki}
\address{Department of Mathematical Science, Faculty of Science, Yamaguchi University, 1677-1 Yoshida, Yamaguchi 753-8512, Japan}
\email{ozeki@Yamaguchi-u.ac.jp}

\thanks{{AMS 2010 {\em Mathematics Subject Classification:}
13H15, 13D40, 13H10.}}

\maketitle
\begin{abstract}Let $M$ be a finitely generated module over a Noetherian local ring.
This paper reports, for a given parameter ideal $Q$ for $M$, a criterion for the equality $\chi_1(Q;M)=\hdeg_Q(M)-\e_Q^0(M)$, where $\chi_1(Q;M)$, $\hdeg_Q(M)$, and $\e_Q^0(M)$ respectively denote the first Euler characteristic,  the homological degree, and the multiplicity of $M$ with respect to $Q$.
We also study homological torsions of $M$ and give a criterion for a certain equality of the first Hilbert coefficients of parameters and the homological torsions of $M$.

\end{abstract}

{\footnotesize
\tableofcontents
}

\section{Introduction}

The notion of homological degree was introduced by W. V. Vasconcelos and his students \cite{DGV} in 1998, and since then,  many authors have been engaged in the development of the theory.
Recently, in \cite{GHV, GhGHOPV2, V3}, Ghezzi, Hong, Phuong, Vasconcelos, and the authors also made use of homological degrees to obtain bounds for the Hilbert coefficients of parameters. The purpose of our paper is to study the relationship between the first Euler characteristics and the homological degrees of modules. We also investigate the first Hilbert coefficients of parameters in connection with the homological torsions of modules.

To state the problems and the results of our paper, first of all, let us fix some of our terminology. Let $A$ be a Noetherian local ring with  maximal ideal $\fkm$ and $d = \dim A > 0$. Let $M$ be a finitely generated $A$-module with $s = \dim_AM$. For simplicity, throughout this paper, we assume that $A$ is $\m$--adically complete and the residue class field $A/\fkm$ of $A$  is infinite. 

For each $j \in \Z$ we set
$$
M_j  
= \Hom_A(\rmH_\fkm^j(M), E),
$$
where $E=\rmE_A(A/\fkm)$ denotes the injective envelope of $A/\fkm$ and $\H_{\m}^j(M)$ the $j$th local cohomology module of $M$ with respect to the maximal ideal $\m$.
Then $M_j$ is a finitely generated $A$-module with $\dim_A M_j \le j$ for all $j \in \Z$ (Fact \ref{fact1}).  Let $I$ be a fixed $\fkm$-primary ideal in $A$ and let $\ell_A(N)$ denote, for an $A$-module $N$, the length of $N$.
Then there exist integers $\{\e_I^i(M)\}_{0 \leq i \leq s}$ such that
$$\ell_A(M/I^{n+1}M)=\e_I^0(M)\binom{n+s}{s}-\e_I^1(M)\binom{n+s-1}{s-1}+\cdots+(-1)^s\e_I^s(M)$$
for all $n \gg 0$. We call $\e_I^i(M)$ the $i$-th Hilbert coefficient of $M$ with respect to $I$ and especially call the leading coefficient $\e_I^0(M)~(>0)$ the multiplicity of $M$ with respect to $I$.

The homological degree $\hdeg_I(M)$ of $M$ with respect to $I$ is inductively defined in the following way, according to the dimension $s = \dim_A M$ of $M$.

\begin{defn}\label{defn1}$($\cite{V2}$)$ For each finitely generated $A$-module $M$ with $s = \dim_A M$, we set 
$$
\hdeg_I(M) = \left\{
\begin{array}{lc}
\ell_A(M) & \mbox{if $s \le 0$},\\
\vspace{1mm}\\
\rme_I^0(M) + \sum_{j=0}^{s-1} \binom{s-1}{j}\hdeg_I(M_j) & \mbox{if $s > 0$}
\end{array}
\right.
$$
and call it the homological degree of $M$ with respect to $I$.
\end{defn}

When $M$ is a generalized Cohen-Macaulay $A$-module, that is a finitely generated $A$-module $M$ whose local cohomology module $\H_{\m}^j(M)$ is finitely generated for all $j\neq s = \dim_AM$, we have
$$\hdeg_I(M) = \e_I^0(M) + \Bbb I (M),$$ where $$\mathbb{I}(M) = \sum_{j=0}^{s-1}\binom{s-1}{j} \ell_A(\H_{\m}^j(M))$$ denotes the St\"{u}ckrad-Vogel invariant of $M$.

In this paper we need also the notion of homological torsion of modules.

\begin{defn}\label{defn2}
Let $M$ be a finitely generated $A$--module with $s= \dim_A M \ge 2$. We set $$\rmT_I^i(M) = \sum_{j=1}^{s-i}\binom{s-i-1}{j-1} \hdeg_I(M_j)$$
for each $1 \leq i \leq s-1$ and call them the homological torsions of $M$ with respect to $I$.
\end{defn}

Notice that the homological degrees $\hdeg_I(M)$ and torsions $\rmT_I^i(M)$ of $M$ with respect to $I$ depend only on the integral closure of $I$.

In this paper we study the first Euler characteristic of modules relative to parameters in connection with  homological degrees.
Let $Q = (a_1, a_2, \ldots, a_s)$ be a parameter ideal for $M$.
We denote by $\rmH_i(Q;M)$~($i \in \mathbb Z)$ the $i$--th homology module of
the Koszul complex $\rmK_{\bullet}(Q; M)$  generated
by the system $a_1, a_2, \ldots, a_s$ of parameters for $M$.
We set
\[\chi_1(Q;M) =  \sum_{i \ge 1}(-1)^{i-1}\ell_{A}(\rmH_i(Q;M))\] and call it
the first {\em Euler characteristic} of $M$ relative to $Q$; hence
\[\chi_1(Q;M) = \ell_A(M/QM) -\e_Q^0(M) \geq 0\] by a classical result of Serre (see \cite{AB}, \cite{Se}).

In \cite{GhGHOPV2}, it was proved that, for parameter ideals $Q$ for $M$, an upper bound
$$ \chi_1(Q;M) \leq \hdeg_Q(M)-\e_Q^0(M)$$
of $\chi_1(Q;M)$  (Proposition \ref{euler}). It seems natural to ask what happens on the parameters $Q$ for $M$, when the equality $$ \chi_1(Q;M) = \hdeg_Q(M)-\e_Q^0(M)$$  is attained. The first main result of this paper answers this question and is stated as follows, where the sequence $a_1,a_2,\ldots,a_d$ is said to be  a $d$-sequence on $M$, if the equality $$[(a_1,a_2,\ldots,a_{i-1})M:_M a_ia_j]=[(a_1,a_2,\ldots,a_{i-1})M:_M a_j]$$ holds true for all $1 \leq i \leq j \leq d$ (\cite{H}).

\begin{thm}\label{thm1}
Let $M$ be a finitely generated $A$-module with $d = \dim_AM$ and let $Q$ be a parameter ideal of $A$.
Then the following conditions are equivalent:
\begin{itemize}
\item[$(1)$] $\chi_1(Q;M)=\hdeg_Q(M)-\e_Q^0(M)$.
\item[$(2)$] The following two conditions are satisfied.
\begin{itemize}
\item[$(a)$]
$$
(-1)^i\e_Q^i(M) = \left\{
\begin{array}{cl}
\rmT_Q^i(M) & \mbox{if $1 \leq i \leq d-1$},\\
\vspace{0.5mm}\\
\ell_A(\H_{\m}^0(M)) & \mbox{if $i=d$}
\end{array}
\right.
$$
for all $1 \leq i \leq d$.
\item[$(b)$] $$\ell_A(M/Q^{n+1}M)=\sum_{i=0}^d(-1)^i\e_Q^i(M)\binom{n+d-i}{d-i}$$ for all $n \geq 0$.
\end{itemize}
\end{itemize}
When this is the case, we have the following$\mathrm{:}$
\begin{itemize}
\item[$(\mathrm{i})$] There exist elements $a_1,a_2,\ldots,a_d \in A$ such that $Q=(a_1,a_2,\ldots,a_d)$ and $a_1,a_2,\ldots,a_d$ forms a $d$-sequence on $M$,
\item[$(\mathrm{ii})$] $QM \cap \H_{\m}^0(M)=(0)$, and $Q\H_{\m}^i(M)=(0)$ for all $1 \leq i \leq d-2$.
\end{itemize}
\end{thm}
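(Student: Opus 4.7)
The plan is to prove (1)$\Leftrightarrow$(2) in two stages, then extract (i) and (ii) from the proof of (1)$\Rightarrow$(2). The direction (2)$\Rightarrow$(1) is a direct summation identity, whereas (1)$\Rightarrow$(2) should use induction on $d=\dim_AM$, with the inequality $\chi_1(Q;M)\le\hdeg_Q(M)-\e_Q^0(M)$ from Proposition \ref{euler} forcing every slack in its proof to vanish in the equality case.

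For (2)$\Rightarrow$(1), specialize (2b) at $n=0$ to get $\ell_A(M/QM)=\sum_{i=0}^d(-1)^i\e_Q^i(M)$, so by (2a)
$$\chi_1(Q;M)=\ell_A(M/QM)-\e_Q^0(M)=\sum_{i=1}^{d-1}\rmT_Q^i(M)+\ell_A(\rmH_\fkm^0(M)).$$
Expanding $\rmT_Q^i(M)$, swapping the order of summation, and applying the hockey-stick identity $\sum_{i=1}^{d-j}\binom{d-i-1}{j-1}=\binom{d-1}{j}$ collapses the double sum to $\sum_{j=1}^{d-1}\binom{d-1}{j}\hdeg_Q(M_j)$. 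Since $\dim_AM_0\le 0$ and Matlis duality preserves length, $\hdeg_Q(M_0)=\ell_A(M_0)=\ell_A(\rmH_\fkm^0(M))$, so the sum becomes $\sum_{j=0}^{d-1}\binom{d-1}{j}\hdeg_Q(M_j)=\hdeg_Q(M)-\e_Q^0(M)$ by Definition \ref{defn1}.

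For (1)$\Rightarrow$(2), proceed by induction on $d$. The case $d=1$ reduces, with $Q=(a)$, to a direct comparison of $\chi_1((a);M)$ with $\e_{(a)}^0(M)$ and $\e_{(a)}^1(M)$, together with the depth-zero analysis that forces all information into $\ell_A(\rmH_\fkm^0(M))$. For $d\ge 2$, pick a superficial element $a\in Q\setminus\fkm Q$ for $M$ extending to a minimal generating set of $Q$, and compare the hypothesis on $M$ with the corresponding data for $M/aM$ (and possibly $M/\rmH_\fkm^0(M)$). Because the inequality in Proposition \ref{euler} is assembled from term-by-term bounds indexed by the $M_j$, equality forces each bound to be sharp; the induction hypothesis applied to the quotient then pins down each $\e_Q^i(M)$ to the value prescribed by (2a), and the rigidity propagates to every $n\ge 0$ in (2b).

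Conclusions (i) and (ii) are read off from the proof rather than proved separately. Sharpness on the Koszul-complex side of Proposition \ref{euler} lets one choose $a_1,\dots,a_d$ so that each successive quotient $M/(a_1,\dots,a_i)M$ again satisfies the equality in (1); this inductive compatibility is precisely the $d$-sequence condition of \cite{H}. The annihilation statements in (ii) arise because sharpness forces $Q$ to act trivially on exactly those local cohomology modules that contribute to the bound. The main obstacle will be this rigidity step: tracing through the proof of Proposition \ref{euler} carefully enough that the single scalar equality in (1) simultaneously yields the numerical identities (2a), (2b) and the structural data (i), (ii).
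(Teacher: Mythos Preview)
Your treatment of $(2)\Rightarrow(1)$ is correct and essentially identical to the paper's, and your plan to obtain the cases $1\le i\le d-1$ of $(2a)$ by induction on $d$, forcing each inequality in the proof of Proposition~\ref{euler} to be an equality, also matches. The gap is in how you obtain $(2b)$, the case $i=d$ of $(2a)$, and assertion $(\mathrm{i})$; here your dependency structure is inverted relative to what actually works.

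In the paper, the order is: first, condition $(1)$ is shown separately to imply $QM\cap\H_\fkm^0(M)=(0)$ (Lemma~\ref{W}), and this is the key ingredient in an independent inductive argument (Proposition~\ref{d-seq}) producing generators $a_1,\dots,a_d$ of $Q$ that form a $d$-sequence on $M$: the step closing the induction is $[(0):_Ma_1a_j]\subseteq[(0):_Ma_j]$, which holds because $a_jm\in[(0):_Ma_1]\cap QM\subseteq\H_\fkm^0(M)\cap QM=(0)$. Only \emph{after} $(\mathrm{i})$ is in hand does $(2b)$ follow, via a general structural result (Proposition~\ref{d-poly}) stating that whenever $Q$ is generated by a $d$-sequence on $M$, the Hilbert function agrees with the Hilbert polynomial for all $n\ge0$; that same proposition also supplies $(-1)^d\e_Q^d(M)=\ell_A(\H_\fkm^0(M))$. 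Your proposal instead hopes $(2b)$ will come from ``rigidity propagating to every $n\ge0$'', but superficiality of $a$ together with the induction hypothesis on $\overline M=M/aM$ gives no control over $Q^{n+1}M\cap aM$ versus $aQ^nM$, so there is no mechanism for the Hilbert \emph{function} (as opposed to the polynomial) to descend. Likewise, ``each successive quotient again satisfies $(1)$'' is not the $d$-sequence condition; the colon-ideal equalities genuinely need the extra input $QM\cap\H_\fkm^0(M)=(0)$, which must be isolated as its own lemma rather than read off from the main induction.
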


Theorem \ref{thm1} shows also that when $M$ is a generalized Cohen-Macaulay $A$-module with $d = \dim_AM$, the parameter ideal $Q$ of $M$ is standard, that is the equality $$\ell_A(M/QM)-\e_Q^0(M)=\mathbb{I}(M):=\sum_{j=0}^{d-1}\binom{d-1}{j}\ell_A(\H_{\m}^j(M))$$ holds true if and only if 
$$
(-1)^i\e_Q^i(M) = \left\{
\begin{array}{ll}
\sum_{j=1}^{d-i}\binom{d-i-1}{j-1}\ell_A(\H_{\m}^j(M)) & \mbox{if $1 \leq i \leq d-1$},\\
\vspace{0.5mm}\\
\ell_A(\H_{\m}^0(M)) & \mbox{if $i=d$}
\end{array}
\right.
$$
for all $1 \leq i \leq d$ and $\ell_A(M/Q^{n+1}M)=\sum_{i=0}^d(-1)^i\e_Q^i(M)\binom{n+d-i}{d-i}$ for all $n \geq 0$.

Our next purpose is to investigate the relationship between the first Hilbert coefficients and the homological torsions for modules. In \cite{GhGHOPV2}, it was proves that the lower bound
$$ \e_Q^1(M) \geq - \rmT_Q^1(M) $$
of the first Hilbert coefficient $\e_Q^1(M)$ in terms of the homological torsion $\rmT_Q^1(M)$ (Proposition \ref{e1}).
Here we notice that the inequality $0 \geq \e_Q^1(M)$ holds true for every parameter ideals $Q$ of $M$ (\cite[Theorem 3.5]{MSV}) and that $M$ is a Cohen-Macaulay $A$-module once $0 = \e_Q^1(M)$ for some parameter ideal $Q$, provided $M$ is unmixed (see \cite{GhGHOPV1}). Recall that $M$ is said to be unmixed, if $\dim A/\p=\dim_A M$ for all $\p \in \Ass_AM$ (since $A$ is assumed to be $\m$-adically complete). It seems now natural to ask what happens on the parameters $Q$ of $M$ which satisfy the equality $ \e_Q^1(M) = - \rmT_Q^1(M) $. The second main result of this paper answers the question and is stated as follows $($Theorem \ref{main2}$)$. 

\begin{thm}\label{thm2}
Let $M$ be a finitely generated $A$-module with $d = \dim_AM\geq 2$ and suppose that $M$ is unmixed. 
Let $Q$ be a parameter ideal of $A$. 
Then the following conditions are equivalent:
\begin{itemize}
\item[$(1)$] $\chi_1(Q;M)=\hdeg_Q(M)-\e_Q^0(M)$.
\item[$(2)$] $\e_Q^1(M)=-\rmT_Q^1(M)$.
\end{itemize}
When this is the case, we have the following$\mathrm{:}$
\begin{itemize}
\item[$(\mathrm{i})$] $(-1)^i\e_Q^i(M)=\rmT_Q^1(M)$ for $2 \leq i\leq d-1$ and $\e_Q^d(M)=0$,
\item[$(\mathrm{ii})$] $\ell_A(M/Q^{n+1}M)=\sum_{i=0}^d(-1)^i\e_{Q}^i(M)\binom{n+d-i}{d-i}$ for all $n \geq 0$,
\item[$(\mathrm{iii})$] there exist elements $a_1,a_2,\ldots,a_d \in A$ such that $Q=(a_1,a_2,\ldots,a_d)$ and $a_1,a_2,\ldots,a_d$ forms a $d$-sequence on $M$, and
\item[$(\mathrm{iv})$] $Q\H_{\m}^i(M)=(0)$ for all $1 \leq i \leq d-2$.
\end{itemize}
\end{thm}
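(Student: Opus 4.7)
The plan is to bootstrap off Theorem~\ref{thm1}, which already characterises condition~(1) through an explicit list of equivalent statements; in particular Theorem~\ref{thm1}(2)(a) at $i=1$ reads $-\e_Q^1(M)=\rmT_Q^1(M)$, so the implication $(1)\Rightarrow(2)$ is immediate. The consequences (i)--(iv) are then read off from Theorem~\ref{thm1}, using that unmixedness of $M$ together with $d\geq 2$ forces $\H_\fkm^0(M)=0$ (no associated prime of $M$ can be $\fkm$); this makes the $i=d$ clause of Theorem~\ref{thm1}(2)(a) reduce to $\e_Q^d(M)=0$, makes condition (ii) of Theorem~\ref{thm1} become $Q\H_\fkm^i(M)=0$ for $1\leq i\leq d-2$, and produces (ii) here from Theorem~\ref{thm1}(2)(b).

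For the harder implication $(2)\Rightarrow(1)$, Proposition~\ref{euler} already yields $\chi_1(Q;M)\leq \hdeg_Q(M)-\e_Q^0(M)$, so it suffices to prove the reverse inequality. Unmixedness of $M$ gives $M_0=0$, hence
$$\hdeg_Q(M)-\e_Q^0(M)\;=\;\sum_{j=1}^{d-1}\binom{d-1}{j}\hdeg_Q(M_j).$$
I would induct on $d\geq 2$. Since $\depth_A M\geq 1$, one can choose a superficial element $a\in Q$ which is $M$-regular and sufficiently generic; setting $\bar M=M/aM$ and $\bar Q=Q/(a)$, standard superficial identities give $\e_Q^i(M)=\e_{\bar Q}^i(\bar M)$ for $0\leq i\leq d-2$ and $\chi_1(Q;M)=\chi_1(\bar Q;\bar M)$. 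The long exact sequence of local cohomology arising from $0\to M\xrightarrow{a} M\to\bar M\to 0$ then controls the Matlis duals $\bar M_j$ in terms of $M_j$ and $M_{j+1}$, which permits comparing $\rmT_{\bar Q}^1(\bar M)$ to $\rmT_Q^1(M)$ and, after rearrangement, extracting the missing inequality $\chi_1(Q;M)\geq \hdeg_Q(M)-\e_Q^0(M)$.

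The base case $d=2$ is the core: there $\bar M$ has dimension $1$ and one may relate $\chi_1(\bar Q;\bar M)$, $\e_{\bar Q}^1(\bar M)$, and $\hdeg_{\bar Q}(\bar M_0)$ through classical one-dimensional length formulas; combining these with hypothesis~(2) and the chain $-\e_Q^1(M)\leq \rmT_Q^1(M)=\hdeg_Q(M_1)$ pins every intermediate inequality to equality, forcing in particular $\e_Q^2(M)=0$ and the Hilbert function of $M$ relative to $Q$ to coincide with its Hilbert polynomial from $n=0$; this supplies every clause of Theorem~\ref{thm1}(2), hence~(1). The main obstacle lies precisely in this transition: after modding out $a$, the quotient $\bar M$ loses unmixedness, acquiring $\fkm$-torsion from $\H_\fkm^1(M)$, so the lower bound of Proposition~\ref{e1} applied to $\bar M$ needs a length-correction, and one has to verify that the correction $\hdeg_{\bar Q}(\bar M_0)$ is exactly accounted for by the $M$-invariants appearing in $\rmT_Q^1(M)$. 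Once~(1) is established, the consequences (i)--(iv) follow immediately from Theorem~\ref{thm1}.
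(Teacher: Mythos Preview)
Your treatment of $(1)\Rightarrow(2)$ and of the consequences (i)--(iv) via Theorem~\ref{thm1} is correct. The gap is entirely in $(2)\Rightarrow(1)$.

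For the base case $d=2$ you claim that the chain of inequalities ``pins every intermediate inequality to equality, forcing in particular $\e_Q^2(M)=0$ and the Hilbert function of $M$ relative to $Q$ to coincide with its Hilbert polynomial from $n=0$''. Neither of these is forced by the chain you describe: the equalities you can actually extract yield only $\e_Q^1(M)=-\ell_A(\H_\m^1(M))$ (using Corollary~\ref{GNa} to know $\H_\m^1(M)$ has finite length). Passing to $\overline M=M/aM$ one finds $\chi_1(Q;M)=\ell_A(\H_\m^0(\overline M))-\ell_A\bigl(Q\overline M\cap \H_\m^0(\overline M)\bigr)$, so what you still need is $Q\overline M\cap \H_\m^0(\overline M)=(0)$, equivalently that $Q$ is a standard parameter ideal. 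This is exactly the content of \cite[Theorem~2.1]{GO1}, which the paper invokes; it is not a formal consequence of your inequality chain, and there is no route from it to $\e_Q^2(M)=0$ or to Theorem~\ref{thm1}(2)(b) that does not already presuppose~(1).

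For the induction step $d\ge 3$ you correctly reduce to $\e_{\overline Q}^1(\overline M)=-\rmT_{\overline Q}^1(\overline M)$, flag that $\overline M$ is no longer unmixed, and then stop. Two substantial ingredients are missing. First, to apply the induction hypothesis one must pass to $L=\overline M/\H_\m^0(\overline M)$ and know that $L$ is unmixed; the paper obtains this from Proposition~\ref{unmixed}, which embeds $M$ into a free module $F$ over a Gorenstein ring so that $L\hookrightarrow F/aF$ inherits unmixedness from the Cohen--Macaulay module $F/aF$. Second, and much harder, after applying induction to $L$ one is left with $\chi_1(Q;M)=\hdeg_Q(M)-\e_Q^0(M)-\ell_A\bigl(Q\overline M\cap \H_\m^0(\overline M)\bigr)$, and the vanishing of the last term is the technical heart of the paper's argument: it requires a second superficial element $b$, a symmetric application of the induction hypothesis to produce a $d$-sequence on the analogous quotient $L'$, and the claim $[a^2M:_M b^2]\subseteq[a^2M:_M b]$, proved using the embedding $M\hookrightarrow F$ once more. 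Your statement that ``the correction $\hdeg_{\overline Q}(\overline M_0)$ is exactly accounted for by the $M$-invariants appearing in $\rmT_Q^1(M)$'' is precisely what all of this machinery is there to establish.
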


Theorem \ref{thm2} also yields that when $M$ is a generalized Cohen-Macaulay $A$-module with $d = \dim_AM \ge 2$, the equality $$\e_Q^1(M) = -\sum_{j=1}^{d-1}\binom{d-2}{j-1}\ell_A(\H_{\m}^j(M))$$ holds true if and only if $Q$ is a standard parameter ideal for $M$, provided $\depth A >0$ (\cite[Korollar 3.2]{Sch}, \cite[Theorem 2.1]{GO1}).

We now briefly explain how this paper is organized.
In Section 2 we will summarize,  for the later use in this paper, some auxiliary results on the homological degrees and torsions.
We shall prove Theorem \ref{thm1} in Section 3 (Theorem \ref{main1}).
In Section 3 we will explore an example of  parameter ideals which satisfy the equality in Theorem \ref{thm1} (1).
Theorem \ref{thm2} will be proven in Section 4 (Theorem \ref{main2}).
Unless $M$ is unmixed, the implication $(2) \Rightarrow (1)$ in Theorem \ref{thm2} does not hold true in general. 
We will show in Section 4 an example of  parameter ideals $Q$ in a two-dimensional mixed local ring $A$ such that $\e_Q^1(A)=-\rmT_Q^1(A)$ but $\chi_1(Q;A) < \hdeg_Q(A)-\e_Q^0(A)$.

In what follows, unless otherwise specified, let $A$ be a Noetherian local ring with maximal ideal $\m$ and $d = \dim A>0$.
Let $M$ be a finitely generated $A$-module with $s=\dim_AM$.
We throughout assume that $A$ is $\m$--adically complete and the field $A/\m$ is infinite.
For each $\m$-primary ideal $I$ in $A$ we set
$$ R=\rmR(I)=A[It], \ \ R'=\rmR'(I)=A[It,t^{-1}], \ \ \mbox{and} \ \ \gr_I(A)=\rmR'(I)/t^{-1}\rmR'(I),$$
where $t$ is an indeterminate over $A$.


\section{Preliminaries}

In this section we summarize some basic properties of homological degrees and torsions of modules, which we need throughout this paper.
Some of the results are known but let us  include brief proofs for the sake of completeness.

For each $j \in \Z$ we set
$$
M_j  
= \Hom_A(\rmH_\fkm^j(M), E),
$$
where $E=\rmE_A(A/\fkm)$ denotes the injective envelope of $A/\fkm$ and $\H_{\m}^j(M)$ the $j$th local cohomology module of $M$ with respect to $\m$.

We begin with the following.

\begin{fact}\label{fact1}
For each $j \in \Z$, $M_j$ is a finitely generated $A$-module with $\dim_A M_j \le j$, where $\dim_A (0) = -\infty$.
\end{fact}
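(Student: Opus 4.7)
The plan is to combine Matlis duality with local duality over a regular local ring. First, recall that $\H_\m^j(M)$ is an Artinian $A$-module for every $j\in\Z$; it vanishes for $j<0$ and, by Grothendieck vanishing, also for $j>s$, in which case $M_j=0$ and the conclusion holds trivially under the convention $\dim_A(0)=-\infty$. Since $A$ is $\m$-adically complete, Matlis duality asserts that $\Hom_A(-,E)$ sends Artinian $A$-modules to finitely generated $A$-modules, which immediately proves that $M_j$ is a finitely generated $A$-module.

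For the dimension estimate, I would invoke Cohen's structure theorem to present $A=S/I$, where $(S,\n)$ is a complete regular local ring of some dimension $n$. Then $\m=\n A$, so $\H_\m^j(M)=\H_\n^j(M)$, and since this module is annihilated by a power of $I$ (indeed by $I$ itself, as $M$ is), Matlis duality over $A$ coincides with Matlis duality over $S$ after restriction of scalars. Local duality for the Gorenstein ring $S$ then furnishes an isomorphism
$$M_j\;\cong\;\Ext_S^{n-j}(M,S)$$
of finitely generated $S$-modules, and lengths, supports, and dimensions are unchanged when we pass between $A$- and $S$-module structures.

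It therefore suffices to show that, for any finitely generated $S$-module $N$ and any $i\ge 0$, one has $\dim_S\Ext_S^i(N,S)\le n-i$, which we then apply with $N=M$ and $i=n-j$ to conclude $\dim_A M_j\le j$. This is a standard localization argument: for any $\p\in\Supp_S\Ext_S^i(N,S)$ the localization $\Ext_{S_\p}^i(N_\p,S_\p)$ is nonzero, which forces $i\le\pd_{S_\p}N_\p\le\dim S_\p=n-\dim S/\p$, whence $\dim S/\p\le n-i$; taking the supremum over such $\p$ yields the bound. The only substantive step is the reduction through Cohen's theorem and local duality; after that the argument is routine, so I anticipate no real obstacle.
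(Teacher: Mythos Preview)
Your proof is correct and follows essentially the same strategy as the paper: reduce via a Cohen presentation to a Gorenstein ambient ring, apply local duality to identify $M_j$ with an $\Ext$ module, and bound its dimension by localizing and invoking the homological dimension of the localization. The only cosmetic differences are that the paper takes a Gorenstein cover $B$ with $\dim B=\dim A$ and uses $\operatorname{injdim} A_\p=\dim A_\p$, whereas you take a regular cover $S$ of possibly larger dimension $n$ and use $\pd_{S_\p}N_\p\le\dim S_\p$; both arguments are equivalent instances of the same idea.
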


\begin{proof}
As $A$ is complete, $A$ is a homomorphic image of a Gorenstein complete local ring $B$ with $\dim B=\dim A$, and  passing to $B$, without loss of generality we may assume that $A$ is a Gorenstein ring.
Let $\p\in \Supp_A M_j$. Then since
$$
M_j \cong \Ext_A^{d-j}(M,A)
$$
by  the local duality theorem,  we get
$$
\Ext_{A_\p}^{d-j}(M_\p , A_\p) \ne (0).
$$
Hence $$d -j \le \operatorname{injdim} A_\p = \dim A_\p.$$
Thus $\dim A/\p= d - \dim A_\p \le j$, whence $\dim_A M_j \le j$.
\end{proof}

We recall the definition of homological degrees.

\begin{defn}\label{hdeg}$($\cite{V2}$)$
For each finitely generated $A$-module $M$ with $s = \dim_A M$ and for each $\m$-primary ideal $I$ of $A$, we set 
$$
\hdeg_I(M) = \left\{
\begin{array}{lc}
\ell_A(M) & \mbox{if $s \le 0$},\\
\vspace{1mm}\\
\rme_I^0(M) + \sum_{j=0}^{s-1} \binom{s-1}{j}\hdeg_I(M_j) & \mbox{if $s > 0$}
\end{array}
\right.
$$
and call it the homological degree of $M$ with respect to $I$.
\end{defn}

Let us summarize some basic properties of $\hdeg_I(M)$.

\begin{fact}\label{fact2}
Let $M$ and $M'$ are finitely generated $A$-modules. Let $I$ be an $\m$-primary ideal in $A$.
Then $0 \le \hdeg_I(M) \in \Z$. We furthermore have the following:
\begin{itemize}
\item[(1)] $\hdeg_I(M) = 0$ if and only if $M=(0)$.
\item[(2)] If $M \cong M'$, then $\hdeg_I(M) = \hdeg_I(M')$.
\item[(3)] $\hdeg_I(M)$ depends only on the integral closure of $I$.
\item[(4)] If $M$ is a generalized Cohen-Macaulay $A$-module, then
$$\hdeg_I(M) - \rme_I^0(M) = \mathbb{I}(M)$$
and
$$ \ell_A(M/QM)-\e_Q^0(M) \leq \mathbb{I}(M)$$
for all parameter ideals $Q$ for $M$ $($\cite{STC}$)$, where $\mathbb{I}(M) = \sum_{j=0}^{s-1}\binom{s-1}{j} \ell_A(\H_{\m}^j(M))$ denotes the St\"{u}ckrad-Vogel invariant of $M$.
\end{itemize}
\end{fact}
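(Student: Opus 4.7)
The plan is to treat all four items by induction on $s = \dim_A M$, using the recursion in Definition \ref{hdeg} together with the bound $\dim_A M_j \le j$ from Fact \ref{fact1} to guarantee that the induction reaches strictly smaller-dimensional modules. Along the way I would freely use three standard facts: that $\e_I^0(M) > 0$ whenever $I$ is $\m$-primary and $\dim_A M > 0$; that $\e_I^0(M)$ depends only on the integral closure $\overline{I}$ of $I$; and Matlis duality, which identifies lengths of finite-length Artinian modules with those of their $E$-duals. First I would establish $\hdeg_I(M) \in \Z_{\ge 0}$ together with (1) simultaneously. The base $s \le 0$ is trivial, since $\hdeg_I(M) = \ell_A(M)$ is a nonnegative integer vanishing precisely when $M = 0$. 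In the inductive step, $\dim_A M_j \le j < s$, so by induction $\hdeg_I(M_j) \in \Z_{\ge 0}$, and then $\hdeg_I(M) \ge \e_I^0(M) > 0$. Item (2) is then immediate: an isomorphism $M \cong M'$ induces isomorphisms on every local cohomology module, hence on each $M_j$, and preserves multiplicities, so every summand in Definition \ref{hdeg} matches.

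For (3), the same induction on $s$ works: the base case is trivial since $\ell_A(M)$ does not involve $I$. In the inductive step, the classical result that $\e_I^0(M)$ depends only on $\overline{I}$ handles the multiplicity term, while the modules $M_j$ are intrinsic to $M$ and independent of $I$, so by induction $\hdeg_I(M_j) = \hdeg_{\overline{I}}(M_j)$; substitution in the recursion closes the argument.

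For (4), the crucial observation is that when $M$ is generalized Cohen-Macaulay, each $\H_\m^j(M)$ with $0 \le j \le s-1$ is finitely generated (by hypothesis) and Artinian $\m$-torsion (always), hence of finite length. Matlis duality then yields $\ell_A(M_j) = \ell_A(\H_\m^j(M))$ and in particular $\dim_A M_j \le 0$, so the base clause of Definition \ref{hdeg} collapses the recursion into
$$\hdeg_I(M) = \e_I^0(M) + \sum_{j=0}^{s-1}\binom{s-1}{j}\ell_A(\H_\m^j(M)) = \e_I^0(M) + \mathbb{I}(M).$$
The remaining inequality $\ell_A(M/QM) - \e_Q^0(M) \le \mathbb{I}(M)$ for parameter ideals $Q$ is the classical St\"{u}ckrad--Vogel bound, which I would simply invoke from \cite{STC}. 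The main obstacle is thus the first half of (4): one must use the finite generation and the Artinianness of $\H_\m^j(M)$ \emph{together} to collapse the recursion of Definition \ref{hdeg} into the closed form $\e_I^0(M) + \mathbb{I}(M)$, which is exactly what the generalized Cohen-Macaulay hypothesis permits.
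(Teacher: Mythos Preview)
The paper does not supply a proof of Fact \ref{fact2}; it is recorded as a list of basic properties (with the St\"uckrad--Vogel inequality attributed to \cite{STC}) and then used freely thereafter. Your argument is correct and is exactly the natural verification: the recursion in Definition \ref{hdeg} together with $\dim_A M_j \le j$ from Fact \ref{fact1} makes induction on $s=\dim_A M$ work for nonnegativity, integrality, (1), (2), and (3), and for (4) the generalized Cohen--Macaulay hypothesis forces $\ell_A(M_j)<\infty$, collapsing each $\hdeg_I(M_j)$ to $\ell_A(\rmH_\fkm^j(M))$ via Matlis duality. There is nothing to add or correct.
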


The following result plays a key role in the analysis of homological degree.

\begin{lem}\label{xyz}$(${\cite[Proposition 3.18]{V2}}$)$
Let $0 \to X \to Y \to Z \to 0$ be an exact sequence of finitely generated $A$-modules. Then the following assertions hold true:
\begin{itemize}
\item[(1)] If $\ell_A(Z) < \infty$, then $\hdeg_I(Y) \le \hdeg_I(X) + \hdeg_I(Z)$.
\item[(2)] If $\ell_A(X) < \infty$, then $\hdeg_I(Y) = \hdeg_I(X) + \hdeg_I(Z)$.
\end{itemize}
\end{lem}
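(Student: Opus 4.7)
The plan is to compute $\hdeg_I(Y)$ directly from Definition \ref{hdeg} via the long exact sequence of local cohomology attached to $0 \to X \to Y \to Z \to 0$ and Matlis duality, with no nontrivial induction on dimension. Let $s = \dim_A Y$. If $s = 0$, all three modules have finite length and both assertions reduce to the additivity of length, so assume $s \ge 1$.

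For part (2), when $\ell_A(X) < \infty$, one has $\H_\m^0(X) = X$ and $\H_\m^j(X) = 0$ for $j \ge 1$. The long exact sequence therefore collapses to $\H_\m^j(Y) \cong \H_\m^j(Z)$ for $j \ge 1$ together with $0 \to X \to \H_\m^0(Y) \to \H_\m^0(Z) \to 0$. Matlis dualizing yields $Y_j \cong Z_j$ for $j \ge 1$ and $\ell_A(Y_0) = \ell_A(X) + \ell_A(Z_0)$. Since $\dim_A X \le 0 < s$, we have $\e_I^0(Y) = \e_I^0(Z)$. Substituting these into Definition \ref{hdeg}, the extra contribution $\ell_A(X)$ inside $\hdeg_I(Y_0)$ equals $\hdeg_I(X)$, and summing gives the equality $\hdeg_I(Y) = \hdeg_I(Z) + \hdeg_I(X)$.

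For part (1), when $\ell_A(Z) < \infty$, the same procedure gives $\H_\m^j(Y) \cong \H_\m^j(X)$ for $j \ge 2$ and an exact complex $0 \to \H_\m^0(X) \to \H_\m^0(Y) \to Z \to \H_\m^1(X) \to \H_\m^1(Y) \to 0$. Dualizing produces $Y_j \cong X_j$ for $j \ge 2$, a short exact sequence $0 \to Y_1 \to X_1 \to K \to 0$ in which $K$ embeds into $Z_0$ and is of finite length with $\ell_A(K) \le \ell_A(Z)$, and a short exact sequence $0 \to Z_0/K \to Y_0 \to X_0 \to 0$ yielding $\ell_A(Y_0) - \ell_A(X_0) = \ell_A(Z) - \ell_A(K)$. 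Combining these with $\e_I^0(Y) = \e_I^0(X)$ and $Y_j \cong X_j$ for $j \ge 2$, Definition \ref{hdeg} gives
\[
\hdeg_I(Y) - \hdeg_I(X) = \bigl(\ell_A(Z) - \ell_A(K)\bigr) + (s-1)\bigl(\hdeg_I(Y_1) - \hdeg_I(X_1)\bigr).
\]

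The main obstacle is the final comparison $\hdeg_I(Y_1) \le \hdeg_I(X_1)$, which fails in general for submodule inclusions with finite length cokernel. It succeeds here because Fact \ref{fact1} bounds $\dim_A Y_1, \dim_A X_1 \le 1$, so Definition \ref{hdeg} collapses to $\hdeg_I(-) = \e_I^0(-) + \ell_A(\H_\m^0(-))$ for both modules. The finite length of $K$ forces $\e_I^0(Y_1) = \e_I^0(X_1)$, and left exactness of $\H_\m^0(-)$ gives $\H_\m^0(Y_1) \hookrightarrow \H_\m^0(X_1)$, so $\ell_A(\H_\m^0(Y_1)) \le \ell_A(\H_\m^0(X_1))$. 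Hence $\hdeg_I(Y_1) \le \hdeg_I(X_1)$ and
\[
\hdeg_I(Y) - \hdeg_I(X) \le \ell_A(Z) - \ell_A(K) \le \ell_A(Z) = \hdeg_I(Z),
\]
completing part (1).
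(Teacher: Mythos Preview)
Your proof is correct and follows essentially the same route as the paper: both derive the five-term Matlis-dual sequence, reduce the comparison to showing $\hdeg_I(Y_1)\le\hdeg_I(X_1)$, and then exploit Fact~\ref{fact1} ($\dim_A M_1\le 1$) to finish. One small imprecision worth fixing: your formula $\hdeg_I(-)=\e_I^0(-)+\ell_A(\H_\m^0(-))$ is only literally valid when the dimension is exactly $1$, so the finite-length case $\ell_A(X_1)<\infty$ (where $\hdeg_I(Y_1)=\ell_A(Y_1)\le\ell_A(X_1)=\hdeg_I(X_1)$ is immediate from the inclusion) should be separated out first, as the paper does.
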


\begin{proof}
(1) We may assume $\ell_A(Y)=\infty$; hence $\ell_A(X)=\infty$. Let $s=\dim_AX=\dim_AY >0$.
Then because $\ell_A(Z) < \infty$, we have the exact sequence
$$ 0 \to \H_{\m}^0(X) \to \H_{\m}^0(Y) \to Z \to \H_{\m}^1(X) \to \H_{\m}^1(Y) \to 0 $$
and isomorphisms $\H_{\m}^j(X) \cong \H_{\m}^j(Y)$~$(j \geq 2)$. Therefore, taking the Matlis dual, we get the exact sequence
$$ 0 \to Y_1 \to X_1 \to Z \to Y_0 \to X_0 \to 0 \ \ \ (\sharp_1)$$
and isomorphisms
$Y_j \cong X_j $~$(j \geq 2)$. Since $\e_I^0(X)=\e_I^0(Y)$ and $\hdeg_I(X_j)=\hdeg_I(Y_j)$ for $j \geq 2$, we have
\begin{eqnarray*}
&&\{\hdeg_I(X) + \hdeg_I(Z)\}-\hdeg_I(Y)\\
&=&\left\{ \rme_I^0(X)+\sum_{j=0}^{s-1}\binom{s-1}{j}\hdeg_I(X_j) + \ell_A(Z) \right\}- \rme_I^0(Y)-\sum_{j=0}^{s-1}\binom{s-1}{j}\hdeg_I(Y_j)\\
&=& (s-1)\{ \hdeg_I(X_1) -\hdeg_I(Y_1) \}+\hdeg_I(X_0)+\ell_A(Z)-\hdeg_I(Y_0)\\
&=& (s-1)\{ \hdeg_I(X_1) -\hdeg_I(Y_1) \}+\ell_A(X_0)+\ell_A(Z) -\ell_A(Y_0).
\end{eqnarray*}
Because $\ell_A(Y_0) \leq \ell_A(X_0)+\ell_A(Z)$ by  exact sequence $(\sharp_1)$, it is enough to show that $\hdeg_I(Y_1) \leq \hdeg_I(X_1)$ in the case where $s \geq 2$. If $\ell_A(X_1) < \infty$, this is clear; see exact sequence $(\sharp_1)$.

Assume that $\dim_AX_1=\dim_AY_1=1$.
We then have 
$$\hdeg_I(X_1)=\e_I^0(X_1) +\hdeg_I([X_1]_0)=\e_I^0(X_1) +\ell_A(\H_{\m}^0(X_1)) ~\text{and}~$$
$$\hdeg_I(Y_1)=\e_I^0(Y_1) +\hdeg_I([Y_1]_0)=\e_I^0(Y_1) +\ell_A(\H_{\m}^0(Y_1)).$$
Because $\e_I^0(X_1)=\e_I^0(Y_1)$ and $\ell_A(\H_{\m}^0(Y_1)) \leq \ell_A(\H_{\m}^0(X_1))$ by exact sequence $(\sharp_1)$, the inequality $\hdeg_I(Y_1) \leq \hdeg_I(X_1)$ follows, which proves assertion (1).

(2) We may assume $\ell_A(Y)=\infty$.
Hence  $\ell_A(Z)=\infty$. Let $s=\dim_AY=\dim_AZ >0$.
This time, because $\ell_A(X) < \infty$, we have the exact sequence
$$ 0 \to X \to \H_{\m}^0(Y) \to \H_{\m}^0(Z) \to 0$$ and isomorphisms $\H_{\m}^j(Y) \cong \H_{\m}^j(Z)$~$(j \geq 1)$. We take the Matlis dual to yield the exact sequence
$$ 0 \to Z_0 \to Y_0 \to X \to 0 \ \ \ (\sharp_2)$$
and isomorphisms
$Z_j \cong Y_j $~$(j \geq 1)$. Then because $\e_I^0(Y)=\e_I^0(Z)$, $\hdeg_I(Y_j)=\hdeg_I(Z_j)$ for all $j \geq 1$, and $\ell_A(X)+\ell_A(Z_0)=\ell_A(Y_0)$ by exact sequence $(\sharp_2)$, we get
\begin{eqnarray*}
&&\{\hdeg_I(X) + \hdeg_I(Z)\}-\hdeg_I(Y) \\
&=& \left\{\ell_A(X) + \rme_I^0(Z)+\sum_{j=0}^{s-1}\binom{s-1}{j}\hdeg_I(Z_j) \right\}-\rme_I^0(Y)-\sum_{j=0}^{s-1}\binom{s-1}{j}\hdeg_I(Y_j)\\
&=& \ell_A(X)+\hdeg_I(Z_0)-\hdeg_I(Y_0)\\
&=& \ell_A(X)+\ell_A(Z_0)-\ell_A(Y_0)=0,
\end{eqnarray*}
which shows assertion (2).
\end{proof}

\begin{rem}\label{remark}
In Lemma \ref{xyz} (1) the equality $$\hdeg_I(Y) = \hdeg_I(X) + \hdeg_I(Z)$$ does not hold true in general, even though $\ell_A(Z) < \infty$. To see this, suppose that  $A$ is a Cohen-Macaulay local ring with $\dim A =1$ and consider the exact sequence $0 \to \fkm \to A \to A/\fkm\to 0.$ Then because $\m$ is a Cohen-Macaulay $A$--module, we have
$\hdeg_I(A) = \rme_I^0(A) = \rme_I^0(\fkm) = \hdeg_I(\fkm),$ so that  $$\hdeg_I(A) < \hdeg_I(A) +1 = \hdeg_I(\m) + \hdeg_I(A/\m)$$
 as $\hdeg_I(A/\fkm) = 1$.
\end{rem}

Let $R=\rmR(I)=A[It] \subseteq A[t]$ be the Rees algebra of $I$ (here $t$ denotes an indeterminate over $A$) and let 
$
f : I \to R, ~a \mapsto at
$
be the identification of $I$ with $R_1 = It$. Set $$\Proj R = \{\p \mid \p \text{ is a graded prime ideal of} ~R ~\text{such that}~\p \not\supseteq R_+\}.$$
We then  have the following.

\begin{lem}\label{superficial1}$(${\cite[Theorem 2.13]{V1}}$)$
Let $M$ be a finitely generated $A$-module. Then there exists a finite subset $\calF \subseteq \Proj R$ such that 
\begin{enumerate}
\item[$(1)$] every $a \in I \setminus \bigcup_{\p \in \calF}[f^{-1}(\p) + \fkm I]$ is superficial for $M$ with respect to $I$ and 
\item[$(2)$] $\hdeg_I(M/aM) \le \hdeg_I(M)$ for each $a \in I \setminus \bigcup_{\p \in \calF}[f^{-1}(\p) + \fkm I]$.
\end{enumerate}
\end{lem}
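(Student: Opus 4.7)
My plan is to treat Part (1) by a standard prime-avoidance argument for superficial elements, and Part (2) by induction on $s = \dim_A M$ via Lemma \ref{xyz} applied to a Matlis-dualized long exact cohomology sequence. For (1), I recall that $a \in I \setminus \fkm I$ is superficial for $M$ with respect to $I$ exactly when its initial form in $[\gr_I(A)]_1$ avoids the finitely many associated primes of $\gr_I(M)$ not containing the irrelevant ideal; pulling these primes back to $R = \rmR(I)$ via $t^{-1}$-inversion yields a finite set $\calF_0 \subseteq \Proj R$ whose complement (in the stated sense) consists of superficial elements.

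For (2), I induct on $s$, the cases $s \le 1$ being handled directly from Definition \ref{hdeg} and the classical length formula for $\ell_A(M/aM)$ in dimension at most one. For $s \ge 2$, Fact \ref{fact1} gives $\dim_A M_j \le j < s$ for $0 \le j \le s - 1$, so the inductive hypothesis furnishes sets $\calF_j \subseteq \Proj R$ that simultaneously control superficial behaviour and the hdeg inequality on each $M_j$. Taking $\calF$ to be the union of $\calF_0$, the $\calF_j$, and a finite collection of additional primes ensuring $a$ is superficial on every $M_j$ as well, I fix $a$ outside the prescribed union. Then $\dim_A M/aM = s - 1$, $\rme_I^0(M/aM) = \rme_I^0(M)$, and $(0 :_M a) \subseteq \H_\fkm^0(M)$ has finite length. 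The short exact sequence $0 \to M/(0:_M a) \xrightarrow{a} M \to M/aM \to 0$ produces a long exact local cohomology sequence in which $\H_\fkm^j(M/(0:_M a)) \cong \H_\fkm^j(M)$ for $j \ge 1$; Matlis dualizing yields, for each $0 \le k \le s - 2$,
$$0 \to M_{k+1}/aM_{k+1} \to (M/aM)_k \to (0 :_{M_k} a) \to 0,$$
whose rightmost term has finite length because $(0 :_{M_k} a) \subseteq \H_\fkm^0(M_k)$.

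Lemma \ref{xyz}(1) combined with the inductive estimate $\hdeg_I(M_{k+1}/aM_{k+1}) \le \hdeg_I(M_{k+1})$ then gives
$$\hdeg_I((M/aM)_k) \le \hdeg_I(M_{k+1}) + \ell_A((0 :_{M_k} a)).$$
Substituting this into Definition \ref{hdeg} for $M/aM$ and reindexing via the Pascal identity $\binom{s-1}{j} = \binom{s-2}{j-1} + \binom{s-2}{j}$, the desired inequality $\hdeg_I(M/aM) \le \hdeg_I(M)$ reduces to
$$\sum_{k=0}^{s-2}\binom{s-2}{k}\ell_A((0 :_{M_k} a)) \le \sum_{k=0}^{s-2}\binom{s-2}{k}\hdeg_I(M_k),$$
which holds term-wise since $\ell_A((0 :_{M_k} a)) \le \ell_A(\H_\fkm^0(M_k)) \le \hdeg_I(M_k)$. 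I expect the main obstacle to be this final combinatorial bookkeeping, together with guaranteeing the finite-length hypothesis on $(0 :_{M_k} a)$ at every stage of the induction; both are dispatched by enlarging $\calF$ to enforce superficiality of $a$ on each $M_j$, so that the Pascal shift cleanly cancels the inductive error terms against the zeroth-order contributions to $\hdeg_I(M)$.
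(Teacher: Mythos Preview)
Your proposal is correct and follows essentially the same route as the paper: induction on $s=\dim_A M$, the Matlis dual of the long exact local cohomology sequence to obtain $0 \to M_{k+1}/aM_{k+1} \to (M/aM)_k \to (0:_{M_k}a) \to 0$, Lemma~\ref{xyz}(1) together with the inductive bound $\hdeg_I(M_{k+1}/aM_{k+1}) \le \hdeg_I(M_{k+1})$, and the Pascal identity to reassemble the sum. The only cosmetic difference is that the paper packages the two estimates as $\hdeg_I(\overline{M}_j) \le \hdeg_I(M_j)+\hdeg_I(M_{j+1})$ before summing, whereas you keep the $\ell_A((0:_{M_k}a))$ term separate and bound it by $\hdeg_I(M_k)$ at the end; these are the same computation.
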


\begin{proof}
We proceed by induction on $s=\dim_A M$. 
If $s \le 0$, choose $\calF = \emptyset$.
Suppose $s=1$ and let $\calF = \{\p \in \Ass_R \gr_I(M) \mid \p \not\supseteq R_+\}$. Then every $a \in I \setminus \bigcup_{\p \in \calF}[f^{-1}(\p) + \fkm I]$ is superficial for $M$ with respect to $I$. Set $W=\rmH_\fkm^0(M)$ and $M'=M/W$. Then, because $M'$ is a Cohen-Macaulay $A$-module, we obtain the exact sequence
$$
0 \to W/aW \to M/aM \to M'/aM' \to 0
$$
from the canonical exact sequence 
$$0 \to W \to M \to M' \to 0.$$
Hence 
\begin{eqnarray*}
\hdeg_I(M/aM) &=& \ell_A(M/aM) \\
&=& \ell_A(M'/aM')+\ell_A(W/aW)\\
&\le& \rme_{(a)}^0(M') + \ell_A(W) \\
&=& \rme_I^0(M) + \hdeg_I(M_0) \\
&=& \hdeg_I(M).
\end{eqnarray*}

Suppose now that $s \geq 2$ and that our assertion holds true for $s-1$.
Because $\dim_A M_j \leq j$ for all $0 \leq j \leq s-1$ (Fact \ref{fact1}), the hypothesis of induction on $s$ shows that there exists a finite subset $\calF \subseteq \Proj R$ such that every $a \in I \setminus \bigcup_{\p \in \calF}[f^{-1}(\p) + \fkm I]$ is superficial for all $M$ and $M_j$ with respect to $I$ and $\hdeg_I(M_j/aM_j) \le \hdeg_I(M_j)$ for all $ 1 \leq j \leq s-1$. We take an element $a \in I \setminus \bigcup_{\p \in \calF}[f^{-1}(\p) + \fkm I]$ and set $\overline{M} = M/aM$. Consider the long exact sequence
$$
0 \to (0):_Ma \to \rmH_\fkm^0(M) \overset{a}{\to} \rmH_\fkm^0(M) \to \rmH_\fkm^0(\overline{M}) \to \rmH_\fkm^1(M) \overset{a}{\to} \rmH_\fkm^1(M) \to \rmH_\fkm^1(\overline{M}) \to \cdots
$$
$$ \cdots \to \rmH_\fkm^j(M) \overset{a}{\to} \rmH_\fkm^j(M) \to \rmH_\fkm^j(\overline{M}) \to \rmH_\fkm^{j+1}(M) \overset{a}{\to} \rmH_\fkm^{j+1}(M) \to \cdots $$
of local cohomology modules induced from the exact sequence
$$ 0 \to (0):_M a \to M \overset{a}{\to} M \to \overline{M} \to 0. $$
Then, taking the Matlis dual of the above long exact sequence, we get exact sequences
$$ 0 \to M_{j+1}/aM_{j+1} \to \overline{M}_j \to (0):_{M_j}a \to 0 $$
and embeddings 
$$ 0 \to (0):_{M_j}a \to M_j $$
for all $0 \le j \le s-2$. 
Consequently, because $\ell_A((0):_{M_j}a) < \infty$,  by Lemma \ref{xyz} we have
\begin{eqnarray*}
\hdeg_I(\overline{M}_j) &\le& \hdeg_I([(0):_{M_j}a]) + \hdeg_I(M_{j+1}/aM_{j+1})\\
&\le& \hdeg_I(M_j) + \hdeg_I(M_{j+1})
\end{eqnarray*}
for each $0 \leq j \leq s-2$, so that 
\begin{eqnarray*}
\hdeg_I(\overline{M}) &=& \rme_I^0(\overline{M}) + \sum_{j=0}^{s-2} \binom{s-2}{j} \hdeg_I(\overline{M}_j)\\
&\le& \rme_I^0(M) + \sum_{j=0}^{s-2}\binom{s-2}{j} \left\{\hdeg_I(M_j) +\hdeg_I(M_{j+1})\right\}\\
&=& \rme_I^0(M) + \sum_{j=0}^{s-1} \binom{s-1}{j} \hdeg_I(M_j)\\
&=& \hdeg_I(M),
\end{eqnarray*}
because $\e_I^0(\overline{M})=\e_I^0(M)$. Hence the result follows.
\end{proof}

\begin{defn}\label{torsion}
Let $M$ be a finitely generated $A$--module with $s = \dim_A M \ge 2$. We set $$\rmT_I^i(M) = \sum_{j=1}^{s-i}\binom{s-i-1}{j-1} \hdeg_I(M_j)$$
for each $1 \leq i \leq s-1$ and call them the homological torsions of $M$ with respect to $I$.
\end{defn}

We notice that $\rmT_I^j(M)$ depends only on the integral closure of $I$.

\begin{lem}\label{superficial2}
Let $M$ be a finitely generated $A$--module with $s = \dim_A M \ge 3$ and $I$  an $\m$-primary ideal of $A$. 
Then, for each $1 \leq i \leq s-2$, there exists a finite subset $\calF \subseteq \Proj R$ such that every $a \in I \setminus \bigcup_{\p \in \calF}[f^{-1}(\p) + \fkm I]$ is superficial for $M$ with respect to $I$, satisfying the inequality 
$$\rmT_I^i(M/aM) \le \rmT_I^i(M).$$
\end{lem}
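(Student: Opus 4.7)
The plan is to reduce the claim to two ingredients that are already in the toolkit: the superficial bound from Lemma \ref{superficial1}, and the short exact sequences that appear (implicitly) in its proof connecting $\overline{M}_j = (M/aM)_j$ to the Matlis duals of $\H_\m^{\ast}(M)$. First, I would apply Lemma \ref{superficial1} simultaneously to $M$ and to each of $M_1, M_2, \ldots, M_{s-1}$. Each application gives a finite bad set $\calF_k \subseteq \Proj R$; taking $\calF = \bigcup_k \calF_k$ (still finite), any $a \in I \setminus \bigcup_{\p \in \calF}[f^{-1}(\p) + \fkm I]$ is superficial for $M$ and for every $M_k$ with respect to $I$, and satisfies $\hdeg_I(M_k/aM_k) \le \hdeg_I(M_k)$ for all $1 \le k \le s-1$.

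Next, for such an $a$, the same Matlis-dualization of the local cohomology long exact sequence used in the proof of Lemma \ref{superficial1} furnishes, for each $0 \le j \le s-2$, a short exact sequence
$$0 \to M_{j+1}/aM_{j+1} \to \overline{M}_j \to (0):_{M_j}a \to 0,$$
together with the observation that $\ell_A((0):_{M_j}a) < \infty$ (superficiality). Since finite-length submodules of $M_j$ are contained in $\H_\m^0(M_j)$, and since $\ell_A(\H_\m^0(M_j)) = \hdeg_I((M_j)_0)$ appears as one of the summands in the recursive definition of $\hdeg_I(M_j)$, we obtain the ``elementary'' estimate $\hdeg_I((0):_{M_j}a) \le \hdeg_I(M_j)$. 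Combining this with Lemma \ref{xyz}(1) and the inequality $\hdeg_I(M_{j+1}/aM_{j+1}) \le \hdeg_I(M_{j+1})$ selected above yields the key bound
$$\hdeg_I(\overline{M}_j) \le \hdeg_I(M_j) + \hdeg_I(M_{j+1}), \qquad 0 \le j \le s-2.$$

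The final step is a purely combinatorial collapse. Since $\dim_A \overline{M} = s-1$, by Definition \ref{torsion} and the inequality above,
$$\rmT_I^i(\overline{M}) = \sum_{j=1}^{s-i-1}\binom{s-i-2}{j-1} \hdeg_I(\overline{M}_j) \le \sum_{j=1}^{s-i-1}\binom{s-i-2}{j-1}\bigl[\hdeg_I(M_j)+\hdeg_I(M_{j+1})\bigr].$$
Reindexing the second half ($k = j+1$) and using Pascal's identity $\binom{s-i-2}{j-1}+\binom{s-i-2}{j-2} = \binom{s-i-1}{j-1}$ (together with the boundary terms $\binom{s-i-2}{0} = \binom{s-i-1}{0}$ and $\binom{s-i-2}{s-i-2} = \binom{s-i-1}{s-i-1}$) reassembles the right-hand side as $\sum_{j=1}^{s-i}\binom{s-i-1}{j-1}\hdeg_I(M_j) = \rmT_I^i(M)$.

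The main obstacle is really only a bookkeeping point: one must verify that applying Lemma \ref{superficial1} to each $M_k$ is legitimate (which is fine, as $\dim_A M_k \le k < s$, so induction on dimension is available) and that the elementary bound $\hdeg_I((0):_{M_j}a) \le \hdeg_I(M_j)$ holds for every $j \ge 0$; once these are in hand, the combinatorial rearrangement via Pascal is automatic and no conditions beyond $1 \le i \le s-2$ are needed.
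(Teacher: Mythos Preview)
Your proposal is correct and follows essentially the same route as the paper's proof: apply Lemma \ref{superficial1} to $M$ and the $M_j$'s to obtain the key inequality $\hdeg_I(\overline{M}_j) \le \hdeg_I(M_j) + \hdeg_I(M_{j+1})$, then collapse the binomial sum via Pascal's identity. Your write-up is in fact slightly more explicit than the paper's, which simply refers back to ``the same argument as is in the proof of Lemma \ref{superficial1}'' for the key inequality.
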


\begin{proof}
Let $1 \leq i \leq s-2$.
Thanks to Lemma \ref{superficial1}, there exists a subset $\calF \subseteq \Proj R$ such that every $a \in I \setminus \bigcup_{\p \in \calF}[f^{-1}(\p)+\m I]$ is superficial for  $M$ and $M_j$ with respect to $I$ and $\hdeg_I(M_j/aM_j) \leq \hdeg_I(M_j)$ for all $1 \leq j \leq s-i-1$. Set $\overline{M}=M/aM$.
Then, by the same argument as is in the proof of Lemma \ref{superficial1}, for all $1 \leq j \leq s-i-1$ we get  inequalities
$$\hdeg_I(\overline{M}_j) \le \hdeg_I(M_j) + \hdeg_I(M_{j+1}).$$ Hence 
\begin{eqnarray*}
\rmT_I^i(\overline{M}) &=& \sum_{j=1}^{s-i-1} \binom{s-i-2}{j-1} \hdeg_I(\overline{M}_j)\\
&\le& \sum_{j=1}^{s-i-1}\binom{s-i-2}{j-1} \left\{\hdeg_I(M_j) +\hdeg_I(M_{j+1})\right\}\\
&=& \sum_{j=1}^{s-i} \binom{s-i-1}{j-1} \hdeg_I(M_j)\\
&=& \rmT_I^i(M),
\end{eqnarray*}
as claimed.
\end{proof}

\section{Relation between  the first Euler characteristics and the homological degrees}

In this section we study the relation between the first Euler characteristics and the homological degrees. Let $Q = (a_1, a_2, \ldots, a_s)$ be a parameter ideal for $M$.
We denote by $\rmH_i(Q;M)$~($i \in \mathbb Z)$ the $i$--th homology  module of
the Koszul complex $\rmK_{\bullet}(Q; M)$  generated
by the system $a_1, a_2, \ldots, a_s$ of parameters for $M$. Set
\[\chi_1(Q;M) =  \sum_{i \ge 1}(-1)^{i-1}\ell_{A}(\rmH_i(Q;M))\] and call it
the first Euler characteristic of $M$ relative to $Q$. Hence
\[\chi_1(Q;M) = \ell_A(M/QM) -\e_Q^0(M) \geq 0.\]

We note the following.

\begin{lem}\label{/a}
Let $M$ be a finitely generated $A$-module with $s=\dim_AM \geq 2$.
Let $Q$ be a parameter ideal for $M$ and assume that $a \in Q \backslash \m Q$ is a superficial element for $M$ with respect to $Q$.
Then $\chi_1(Q;M)=\chi_1(\overline{Q};\overline{M})$, where $\overline{M}=M/aM$ and $\overline{Q}=Q/(a)$.
\end{lem}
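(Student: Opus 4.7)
The strategy is to reduce everything to the classical identity
\[
\chi_1(Q;M) \;=\; \ell_A(M/QM) - \e_Q^0(M),
\]
and its analogue for the pair $(\overline{Q},\overline{M})$. The plan is then to verify separately that the two terms on the right-hand side agree under the passage $M \rightsquigarrow \overline{M}$.

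For the length term, the identification $\overline{Q}\,\overline{M} = (Q/(a))\cdot(M/aM) = (QM + aM)/aM = QM/aM$ yields a canonical isomorphism
\[
\overline{M}/\overline{Q}\,\overline{M} \;=\; (M/aM)\big/(QM/aM) \;\cong\; M/QM,
\]
so $\ell_A(M/QM) = \ell_A(\overline{M}/\overline{Q}\,\overline{M})$. This step is purely formal.

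For the multiplicity term, I would invoke the standard behaviour of the Hilbert--Samuel multiplicity under reduction by a superficial element: since $a \in Q \setminus \fkm Q$ is superficial for $M$ with respect to $Q$ and $s = \dim_A M \ge 2$, the element $a$ is part of a minimal system of parameters, $\dim_A \overline{M} = s-1 \ge 1$, and $\e_Q^0(M) = \e_{\overline{Q}}^0(\overline{M})$. The underlying computation comes from the short exact sequence
\[
0 \to aM/(aM \cap Q^{n+1}M) \to M/Q^{n+1}M \to \overline{M}/\overline{Q}^{\,n+1}\overline{M} \to 0,
\]
together with the superficiality relation $aM \cap Q^{n+1}M = aQ^n M$ for $n \gg 0$ and the fact that $\dim_A(0:_M a) \le s-1 < s$ (because $a$ avoids all associated primes of $M$ of maximal dimension); the middle term $aM/(aM\cap Q^{n+1}M) \cong M/(Q^{n+1}M :_M a)$ then contributes only to lower order in $n$, so the leading Hilbert coefficients coincide. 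This is the only nontrivial input and is the step where care is needed; everything else is bookkeeping.

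Combining the two equalities with Serre's formula $\chi_1(Q;M) = \ell_A(M/QM) - \e_Q^0(M)$ yields
\[
\chi_1(Q;M) = \ell_A(\overline{M}/\overline{Q}\,\overline{M}) - \e_{\overline{Q}}^0(\overline{M}) = \chi_1(\overline{Q};\overline{M}),
\]
which completes the proof. The main obstacle (such as it is) is the multiplicity preservation in Step~2; given the authors' setup, I expect it will simply be cited from the classical theory of superficial elements rather than reproved here.
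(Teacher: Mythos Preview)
Your proposal is correct and follows exactly the paper's approach: the paper's proof is a one-line computation using Serre's formula $\chi_1(Q;M)=\ell_A(M/QM)-\e_Q^0(M)$ together with the two equalities $\ell_A(M/QM)=\ell_A(\overline{M}/Q\overline{M})$ and $\e_Q^0(M)=\e_Q^0(\overline{M})$, the latter being taken for granted from superficiality. Your anticipation that the multiplicity preservation would simply be cited rather than reproved is spot on.
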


\begin{proof}
We get $\chi_1(Q;M)=\ell_A(M/QM)-\e_Q^0(M)=\ell_A(\overline{M}/Q\overline{M})-\e_Q^0(\overline{M})=\chi_1(\overline{Q};\overline{M})$, as $\e_Q^0(M)=\e_Q^0(\overline{M})$ and $\ell_A(M/QM)=\ell_A(\overline{M}/Q\overline{M})$.
\end{proof}

The following inequality is due to \cite{GhGHOPV2}.
We indicate a brief proof for the sake of completeness.

\begin{prop}\label{euler}$($\cite[Theorem 7.2]{GhGHOPV2}$)$
Let $M$ be a finitely generated $A$-module with $d=\dim_AM$.
Then $$\chi_1(Q;M) \leq \hdeg_Q(M)-\rme_Q^0(M)$$
for every parameter ideal $Q$ of $A$.
\end{prop}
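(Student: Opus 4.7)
Since $\chi_1(Q;M) = \ell_A(M/QM) - \e_Q^0(M)$, the inequality to be proved is equivalent to $\ell_A(M/QM) \leq \hdeg_Q(M)$. My plan is to argue by induction on $d = \dim_A M \geq 1$, using Lemma \ref{/a} and Lemma \ref{superficial1} to peel off a superficial element and reduce to a lower-dimensional module.

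For the base case $d = 1$, write $Q = (a)$. The Koszul complex $\rmK_\bullet((a);M)$ has only $\rmH_1((a);M) = (0):_M a$ in positive degrees, so $\chi_1(Q;M) = \ell_A((0):_M a)$. For any $x \in (0):_M a$, the cyclic submodule $Ax$ is annihilated by $a$, hence has dimension zero because $a$ is a parameter for $M$, and so $(0):_M a \subseteq \H_\fkm^0(M)$. Matlis duality on the finite-length module $\H_\fkm^0(M)$ gives $\ell_A(M_0) = \ell_A(\H_\fkm^0(M))$, while the one-dimensional case of Definition \ref{hdeg} reads $\hdeg_Q(M) - \e_Q^0(M) = \hdeg_Q(M_0) = \ell_A(\H_\fkm^0(M))$, which together deliver the base case.

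For the inductive step $d \geq 2$, I would invoke Lemma \ref{superficial1} to pick $a \in Q \setminus \fkm Q$ which is superficial for $M$ with respect to $Q$ and additionally satisfies $\hdeg_Q(M/aM) \leq \hdeg_Q(M)$. Setting $\overline{M} = M/aM$ and $\overline{Q} = Q/(a)$, Lemma \ref{/a} gives $\chi_1(Q;M) = \chi_1(\overline{Q};\overline{M})$. Since $\dim_A \overline{M} = d-1$ and $\e_{\overline{Q}}^0(\overline{M}) = \e_Q^0(\overline{M}) = \e_Q^0(M)$ by superficiality, the induction hypothesis applied to $\overline{M}$ yields
$$
\chi_1(Q;M) = \chi_1(\overline{Q};\overline{M}) \leq \hdeg_{\overline{Q}}(\overline{M}) - \e_{\overline{Q}}^0(\overline{M}) = \hdeg_Q(\overline{M}) - \e_Q^0(M) \leq \hdeg_Q(M) - \e_Q^0(M),
$$
with the last inequality being the conclusion of Lemma \ref{superficial1}. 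This completes the induction.

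The argument is a direct induction once the two preparatory lemmas are in hand. The only delicate point is the base case, which rests on the inclusion $(0):_M a \subseteq \H_\fkm^0(M)$ for $a$ a parameter of a one-dimensional module, and on identifying $\ell_A(M_0)$ with $\ell_A(\H_\fkm^0(M))$ via Matlis duality for finite-length modules. I do not foresee any serious obstacle beyond this routine verification.
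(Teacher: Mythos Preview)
Your proof is correct and follows essentially the same inductive strategy as the paper: reduce via Lemma~\ref{superficial1} and Lemma~\ref{/a} to a superficial quotient, then apply the inductive hypothesis. The only cosmetic difference is in the base case $d=1$: the paper simply invokes Fact~\ref{fact2}(4) for generalized Cohen--Macaulay modules to get $\ell_A(M/QM)-\e_Q^0(M)\le\ell_A(\H_\fkm^0(M))$, whereas you compute $\chi_1(Q;M)=\ell_A((0):_Ma)$ directly from the Koszul complex and use the inclusion $(0):_Ma\subseteq\H_\fkm^0(M)$; both arrive at the same bound.
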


\begin{proof}
Suppose $d=1$. Then $M$ is a generalized Cohen-Macaulay $A$-module and hence
$$\chi_1(Q;M)=\ell_A(M/QM)-\e_Q^0(M) \leq \ell_A(\H_{\m}^0(M))=\hdeg_Q(M_0)=\hdeg_Q(M)-\rme_Q^0(M)$$
by Fact \ref{fact2} (4). Assume that $d \geq 2$ and that our assertion holds true for $d-1$.
We choose an element $a \in Q \backslash \m Q$ so that $a$ is  superficial for $M$ with respect to $Q$ and $\hdeg_Q(M/aM) \leq \hdeg_Q(M)$ (Lemma \ref{superficial1}). Then, setting $\overline{M}=M/aM$, by Lemma \ref{/a} and the hypothesis of induction on $d$ we get 
$$ \chi_1(Q;M)=\chi_1(\overline{Q};\overline{M}) \leq \hdeg_{Q}(\overline{M})-\e_Q^0(\overline{M}) \leq \hdeg_Q(M)-\e_Q^0(M).$$
\end{proof}

It seems  natural to ask what happens on the parameters $Q$ for $M$, when $\chi_1(Q;M)=\hdeg_Q(M)-\e_Q^0(M)$. The following   theorem answers the question, which is the main result of this section.

\begin{thm}\label{main1}
Let $M$ be a finitely generated $A$-module with $d=\dim_AM$.
Let $Q$ be a parameter ideal of $A$.
Then the following conditions are equivalent:

\begin{itemize}
\item[$(1)$] $\chi_1(Q;M)=\hdeg_Q(M)-\e_Q^0(M)$.
\item[$(2)$] The following conditions are satisfied:
\begin{itemize}
\item[$(\mathrm{a})$]
$$
(-1)^i\e_Q^i(M) = \left\{
\begin{array}{cl}
\rmT_Q^i(M) & \mbox{if $1 \leq i \leq d-1$},\\
\vspace{0.5mm}\\
\ell_A(\H_{\m}^0(M)) & \mbox{if $i=d$}
\end{array}
\right.
$$
for all $1 \leq i \leq d$ and
\item[$(\mathrm{b})$] $$\ell_A(M/Q^{n+1}M)=\sum_{i=0}^d(-1)^i\e_Q^i(M)\binom{n+d-i}{d-i}$$ for all $n \geq 0$.
\end{itemize}
\end{itemize}
When this is the case, we have the following$\mathrm{:}$
\begin{itemize}
\item[$(\mathrm{i})$] There exist elements $a_1,a_2,\ldots,a_d \in A$ such that $Q=(a_1,a_2,\ldots,a_d)$ and $a_1,a_2,\ldots,a_d$ forms a $d$-sequence on $M$.
\item[$(\mathrm{ii})$] $QM \cap \H_{\m}^0(M)=(0)$ and $Q\H_{\m}^i(M)=(0)$ for all $1 \leq i \leq d-2$.
\end{itemize}
\end{thm}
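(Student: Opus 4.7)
The plan is to prove $(2)\Rightarrow(1)$ by a short combinatorial identity, and $(1)\Rightarrow(2)$ together with the trailing conclusions (i) and (ii) by induction on $d=\dim_A M$, reducing modulo a generic superficial element of $Q$. For the easy direction, I would specialize (2)(b) at $n=0$ and substitute (2)(a) to obtain $\chi_1(Q;M)=\sum_{i=1}^{d-1}\rmT_Q^i(M)+\ell_A(\H_\m^0(M))$; expanding each $\rmT_Q^i(M)=\sum_{j=1}^{d-i}\binom{d-i-1}{j-1}\hdeg_Q(M_j)$, swapping the two summations, and invoking the hockey-stick identity $\sum_{i=1}^{d-j}\binom{d-i-1}{j-1}=\binom{d-1}{j}$ collapses the double sum to $\sum_{j=0}^{d-1}\binom{d-1}{j}\hdeg_Q(M_j)=\hdeg_Q(M)-\e_Q^0(M)$, which is exactly (1).

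For the base case $d=1$ of $(1)\Rightarrow(2)$, write $Q=(a)$, $W=\H_\m^0(M)$, $M'=M/W$; since $M'$ is Cohen--Macaulay of dimension one, $\hdeg_Q(M)-\e_Q^0(M)=\ell_A(W)$, and a snake-lemma argument on $0\to W\to M\to M'\to 0$ gives $\chi_1(Q;M)=\ell_A(W/aW)$. Hence the hypothesis forces $aW=0$, then $aM\cap W=0$ (since $a$ acts as a non-zero-divisor on $M'$), and therefore $Q^{n+1}M\cap W=0$ for every $n\ge 0$, yielding (2)(b), the relation $\e_Q^1(M)=-\ell_A(W)$, and both (i) and (ii) directly.

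For the inductive step $d\ge 2$, using Lemma \ref{superficial1} I would choose $a\in Q\setminus\m Q$ superficial for $M$ and for each $M_j$ with $\hdeg_Q(M/aM)\le\hdeg_Q(M)$, and set $\overline{M}=M/aM$, $\overline{Q}=Q/(a)$. Lemma \ref{/a} combined with Proposition \ref{euler} applied to $\overline{M}$ gives the sandwich
\[
\chi_1(Q;M)=\chi_1(\overline{Q};\overline{M})\le\hdeg_Q(\overline{M})-\e_Q^0(\overline{M})\le\hdeg_Q(M)-\e_Q^0(M),
\]
which the hypothesis forces to be a chain of equalities. The first equality activates the induction hypothesis on $\overline{M}$, delivering (2)(a), (2)(b), (i) and (ii) for $\overline{M}$. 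The second equality, unpacked through the estimate in the proof of Lemma \ref{superficial1}, forces $\hdeg_Q(\overline{M}_j)=\hdeg_Q(M_j)+\hdeg_Q(M_{j+1})$ for $0\le j\le d-2$; combined with the exact sequence $0\to M_{j+1}/aM_{j+1}\to\overline{M}_j\to(0):_{M_j}a\to 0$ and the finite-length additivity of Lemma \ref{xyz}(2), this pins down $aM_j=0$ for $1\le j\le d-2$ and $aM\cap\H_\m^0(M)=0$, which is precisely (ii) for $M$. The standard superficial-element Hilbert-coefficient relations $\e_Q^i(M)=\e_Q^i(\overline{M})$ for $0\le i\le d-2$, with a correction at $i=d-1$ by $\ell_A((0):_M a)=\ell_A(\H_\m^0(M))$ (the latter equality coming from (ii)), then translate (2)(a) and (2)(b) for $\overline{M}$ into (2)(a) and (2)(b) for $M$; assertion (i) is obtained by lifting the $d$-sequence on $\overline{M}$ supplied by induction and prepending $a$.

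The main obstacle is this last transfer. Two subtleties intertwine: first, the superficial-element recurrence identifies $\ell_A(M/Q^{n+1}M)$ with the candidate Hilbert polynomial only for $n\gg 0$, whereas (2)(b) demands the identity for \emph{every} $n\ge 0$, and removing the asymptotic restriction hinges on the vanishing $Q^{n+1}M\cap\H_\m^0(M)=0$ for all $n$, i.e.\ on (ii). Second, extracting (ii) from the equality $\hdeg_Q(\overline{M})=\hdeg_Q(M)$ is not mechanical, because Lemma \ref{xyz}(1) fails to be additive in general (cf.\ Remark \ref{remark}); the argument must exploit the specific shape of the Matlis-dual exact sequences arising from the local-cohomology long exact sequence for multiplication by $a$, together with the observation that the forced equality $\hdeg_Q((0):_{M_j}a)=\hdeg_Q(M_j)$ leaves no room other than $aM_j=0$.
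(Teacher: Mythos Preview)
Your overall strategy---$(2)\Rightarrow(1)$ by the binomial identity, $(1)\Rightarrow(2)$ by induction modulo a generic superficial element, squeezing $\chi_1(\overline{Q};\overline{M})\le\hdeg_Q(\overline{M})-\e_Q^0(\overline{M})\le\hdeg_Q(M)-\e_Q^0(M)$ into a chain of equalities---is exactly the paper's. The substantive organisational difference is that the paper does \emph{not} attempt to transfer (2)(b) inductively from $\overline{M}$ to $M$. Instead it first isolates assertion~(i) as a stand-alone proposition (Proposition~\ref{d-seq}), whose proof is the induction you sketch (prepending $a$ to the $d$-sequence on $\overline{M}$) together with Lemma~\ref{W}, which supplies $QM\cap\H_\m^0(M)=(0)$ and is precisely what makes the prepending step go through. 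Once (i) is established, (2)(b) and the case $i=d$ of (2)(a) drop out of Proposition~\ref{d-poly}, a general fact about $d$-sequences. Only the cases $1\le i\le d-1$ of (2)(a) are then handled by the superficial-element induction, and for those the relations $\e_Q^i(M)=\e_Q^i(\overline{M})$ together with the forced equalities $\hdeg_Q(\overline{M}_j)=\hdeg_Q(M_j)+\hdeg_Q(M_{j+1})$ are enough.

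Your ``main obstacle'' is therefore an artifact of bundling (b) into the same induction. Your proposed fix---controlling $Q^{n+1}M\cap\H_\m^0(M)$---would at best transfer (b) from $M/\H_\m^0(M)$ to $M$, not from $\overline{M}=M/aM$ to $M$; there is no clean formula relating $\ell_A(M/Q^{n+1}M)$ to $\ell_A(\overline{M}/\overline{Q}^{n+1}\overline{M})$ valid for \emph{every} $n\ge 0$, so that route does not close. Go through Proposition~\ref{d-poly} instead.

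Two smaller points. First, what the $\hdeg$ equality pins down is $aM_j=0$ for $0\le j\le d-2$, in particular $a{\cdot}\H_\m^0(M)=0$; this is not the same as $aM\cap\H_\m^0(M)=(0)$, which the paper gets separately from Lemma~\ref{W}. Second, to upgrade $a\H_\m^i(M)=0$ for one superficial $a$ to the full $Q\H_\m^i(M)=0$ claimed in (ii), the paper runs the argument symmetrically for each member of a suitably chosen minimal generating set of $Q$; your outline only treats a single $a$.
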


To prove Theorem \ref{main1}, we need the following:

\begin{lem}\label{W}
Let $M$ be a finitely generated $A$-module with $d = \dim_AM$ and let $Q$ be a parameter ideal of $A$.
Then $\chi_1(Q;M)=\hdeg_Q(M)-\e_Q^0(M)$ if and only if $\chi_1(Q;M/\H_{\m}^0(M))=\hdeg_Q(M/\H_{\m}^0(M))-\e_Q^0(M/\H_{\m}^0(M))$ and $QM \cap \H_{\m}^0(M)=(0)$.
\end{lem}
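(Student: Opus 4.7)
The plan is to reduce to the short exact sequence
$$0 \to W \to M \to M' \to 0,$$
where $W = \H_{\m}^0(M)$ and $M' = M/W$, and then compare the quantities $\chi_1(Q;-)$ and $\hdeg_Q(-) - \e_Q^0(-)$ along this sequence. Since $\ell_A(W) < \infty$, Lemma \ref{xyz}(2) yields $\hdeg_Q(M) = \hdeg_Q(W) + \hdeg_Q(M') = \ell_A(W) + \hdeg_Q(M')$, while clearly $\e_Q^0(M) = \e_Q^0(M')$. Hence
$$\hdeg_Q(M) - \e_Q^0(M) \;=\; [\hdeg_Q(M') - \e_Q^0(M')] + \ell_A(W).$$

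Next I would compute the left-hand side $\chi_1(Q;M)$ in the same spirit. Applying $- \otimes_A A/Q$ to the short exact sequence gives the four-term exact sequence
$$0 \to W/(QM \cap W) \to M/QM \to M'/QM' \to 0,$$
so $\ell_A(M/QM) = \ell_A(M'/QM') + \ell_A(W) - \ell_A(QM \cap W)$. Subtracting $\e_Q^0(M) = \e_Q^0(M')$ yields
$$\chi_1(Q;M) \;=\; \chi_1(Q;M') + \ell_A(W) - \ell_A(QM \cap W).$$

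Combining the two displayed identities, I obtain
$$[\hdeg_Q(M) - \e_Q^0(M)] - \chi_1(Q;M) \;=\; [\hdeg_Q(M') - \e_Q^0(M') - \chi_1(Q;M')] + \ell_A(QM \cap W).$$
By Proposition \ref{euler} the bracket on the right is $\ge 0$, and obviously $\ell_A(QM\cap W) \ge 0$. Therefore the whole right-hand side vanishes if and only if both summands vanish, that is, if and only if $\chi_1(Q;M') = \hdeg_Q(M') - \e_Q^0(M')$ and $QM \cap \H_{\m}^0(M) = (0)$. This is precisely the claimed equivalence.

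The only genuinely delicate point is the four-term sequence in the second step: one has to verify that the connecting map identifies $\mathrm{Tor}_1^A(M',A/Q) \to W/QW$ with the quotient map $W/QW \twoheadrightarrow W/(QM \cap W)$, so that the kernel of $W/QW \to M/QM$ is exactly $(QM \cap W)/QW$. After that, the proof is a purely bookkeeping combination of additivity of $\hdeg_Q$ on finite-length extensions (Lemma \ref{xyz}(2)), additivity of $\e_Q^0$, and the nonnegativity statement in Proposition \ref{euler}.
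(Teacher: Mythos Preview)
Your proof is correct and follows essentially the same approach as the paper: both arguments use the exact sequence $0 \to W/(QM\cap W) \to M/QM \to M'/QM' \to 0$, the additivity $\hdeg_Q(M)=\hdeg_Q(M')+\ell_A(W)$ from Lemma~\ref{xyz}(2), the equality $\e_Q^0(M)=\e_Q^0(M')$, and Proposition~\ref{euler}. The only difference is organizational: the paper treats the two implications separately, whereas you package everything into the single identity
\[
[\hdeg_Q(M)-\e_Q^0(M)]-\chi_1(Q;M)=[\hdeg_Q(M')-\e_Q^0(M')-\chi_1(Q;M')]+\ell_A(QM\cap W)
\]
and observe that a sum of two nonnegative terms vanishes iff each does. (Minor remark: the sequence you call ``four-term'' is the short exact sequence you wrote; there is no need to invoke $\Tor$, since the identification $\ker(W/QW\to M/QM)=(QM\cap W)/QW$ is immediate from the inclusion $W\hookrightarrow M$.)
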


\begin{proof}
We set $W=\H_{\m}^0(M)$ and $M'=M/W$. Consider the exact sequence
$$ 0 \to W/[QM \cap W] \to M/QM \to M'/QM' \to 0 \ \ \ \ \ (\sharp_3) $$
obtained by  the canonical exact sequence
$$ 0 \to W \to M \to M' \to 0. $$ Assume that $\chi_1(Q;M)=\hdeg_Q(M)-\e_Q^0(M)$. We then have
\begin{eqnarray*}
\chi_1(Q;M)&=&\ell_A(M/QM) -\e_Q^0(M)\\
&=& \{\ell_A(M'/QM')+\ell_A(W/[QM \cap W])\} -\e_Q^0(M')\\
&=& \chi_1(Q;M')+\{\ell_A(W)-\ell_A(QM \cap W)\}\\
&\leq& \{\hdeg_Q(M')-\e_Q^0(M')\}+ \ell_A(W)\\
&=& \hdeg_Q(M)-\e_Q^0(M) = \chi_1(Q;M),
\end{eqnarray*}
because $\e_Q^0(M)=\e_Q^0(M')$, $\ell_A(M/QM)=\ell_A(M'/QM')+\ell_A(W/QM \cap W)$ by exact sequence $(\sharp_3)$, $\chi_1(Q;M') \leq \hdeg_Q(M') - \e_Q^0(M')$ by Proposition \ref{euler}, and $\hdeg_Q(M)=\hdeg_Q(M')+ \ell_A(W)$ by Lemma \ref{xyz} (2). Thus $\chi_1(Q;M')=\hdeg_Q(M')-\e_Q^0(M')$ and $QM \cap W=(0)$. Conversely, assume that $\chi_1(Q;M')=\hdeg_Q(M')-\e_Q^0(M')$ and $QM \cap W=(0)$.
Then 
\begin{eqnarray*}
\chi_1(Q;M)&=&\ell_A(M/QM) -\e_Q^0(M)\\
&=& \{\ell_A(M'/QM')+\ell_A(W)\} -\e_Q^0(M')\\
&=& \chi_1(Q;M')+\ell_A(W)\\
&=& \{\hdeg_Q(M')-\e_Q^0(M')\}+ \ell_A(W)\\
&=& \hdeg_Q(M)-\e_Q^0(M),
\end{eqnarray*}
because $\e_Q^0(M)=\e_Q^0(M')$, $\ell_A(M/QM)=\ell_A(M'/QM')+\ell_A(W)$ by exact sequence $(\sharp_3)$, and $\hdeg_Q(M)=\hdeg_Q(M')+ \ell_A(W)$ by Lemma \ref{xyz} (2), which  proves Lemma \ref{W}.
\end{proof}

The following result shows that Theorem \ref{main1} (i) holds true, once $\chi_1(Q;M)=\hdeg_Q(M)-\e_Q^0(M)$.

\begin{prop}\label{d-seq}
Let $M$ be a finitely generated $A$-module with $d = \dim_AM$ and $Q$  a parameter ideal of $A$.
Let $a_1 \in Q \backslash \m Q$ be a superficial element for $M$ with respect to $Q$ such that $\hdeg_Q(M/a_1M) \leq \hdeg_Q(M)$.
Assume that $$\chi_1(Q;M)=\hdeg_Q(M)-\e_Q^0(M).$$
Then there exist elements $a_2,a_3,\ldots,a_d \in A$ such that $Q=(a_1, a_2,\ldots,a_d)$ and $a_1, a_2,\ldots,a_d$ forms a $d$-sequence on $M$.
\end{prop}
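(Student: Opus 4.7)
The plan is to induct on $d = \dim_A M$. The central algebraic input we need, extracted from the hypothesis, is the identity
\[ [(0):_M a_1] \cap QM = (0). \qquad (\dagger) \]
Since $a_1 \in Q \setminus \m Q$ is superficial for $M$ with respect to $Q$, a standard Artin--Rees argument shows that $(0):_M a_1$ has finite length and hence $(0):_M a_1 \subseteq \H_{\m}^0(M)$; applying Lemma \ref{W} to the hypothesis $\chi_1(Q;M) = \hdeg_Q(M) - \e_Q^0(M)$ gives $QM \cap \H_{\m}^0(M) = (0)$, and $(\dagger)$ follows by intersecting.

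In the base case $d = 1$ we have $Q = (a_1)$ and the only $d$-sequence condition to verify is $(0):_M a_1^2 = (0):_M a_1$: for $x \in (0):_M a_1^2$ we have $a_1 x \in [(0):_M a_1] \cap a_1 M \subseteq [(0):_M a_1] \cap QM = (0)$ by $(\dagger)$, so $x \in (0):_M a_1$. For the inductive step ($d \geq 2$), set $\overline{M} = M/a_1 M$ and $\overline{Q} = Q/(a_1)$. By Lemma \ref{/a} we have $\chi_1(\overline{Q}; \overline{M}) = \chi_1(Q;M)$, the superficiality of $a_1$ gives $\e_{\overline{Q}}^0(\overline{M}) = \e_Q^0(M)$, and by hypothesis $\hdeg_{\overline{Q}}(\overline{M}) \leq \hdeg_Q(M)$; combined with Proposition \ref{euler}, these force the equality $\chi_1(\overline{Q}; \overline{M}) = \hdeg_{\overline{Q}}(\overline{M}) - \e_{\overline{Q}}^0(\overline{M})$. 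Using Lemma \ref{superficial1}, choose a superficial $b \in \overline{Q} \setminus \m \overline{Q}$ for $\overline{M}$ with $\hdeg_{\overline{Q}}(\overline{M}/b\overline{M}) \leq \hdeg_{\overline{Q}}(\overline{M})$, lift $b$ to some $a_2 \in Q$, and apply the inductive hypothesis to produce elements $a_3, \ldots, a_d \in Q$ whose images form a $d$-sequence $\overline{a_2}, \ldots, \overline{a_d}$ on $\overline{M}$. Then $Q = (a_1, a_2, \ldots, a_d)$.

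It remains to check the $d$-sequence conditions for $a_1, \ldots, a_d$ on $M$. For $i \geq 2$ and $j \geq i$, the condition $(a_1,\ldots,a_{i-1})M :_M a_i a_j = (a_1,\ldots,a_{i-1})M :_M a_j$ descends modulo $a_1 M$ to the corresponding condition for $\overline{a_2}, \ldots, \overline{a_d}$ on $\overline{M}$, which holds by induction. For $i = 1$ and any $j \in \{1,\ldots,d\}$, if $a_1 a_j x = 0$ then $a_j x \in [(0):_M a_1] \cap a_j M \subseteq [(0):_M a_1] \cap QM = (0)$ by $(\dagger)$, yielding $(0):_M a_1 a_j = (0):_M a_j$. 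The main obstacle is the identity $(\dagger)$: this is precisely where the equality hypothesis is genuinely used (via Lemma \ref{W}), and once it is in place the entire $d$-sequence verification unwinds mechanically from the induction and routine colon-ideal manipulations under the quotient $M \twoheadrightarrow \overline{M}$.
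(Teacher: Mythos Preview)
Your proof is correct and follows essentially the same approach as the paper's: induction on $d$, passage to $\overline{M}=M/a_1M$ to force $\chi_1(\overline{Q};\overline{M})=\hdeg_{\overline{Q}}(\overline{M})-\e_{\overline{Q}}^0(\overline{M})$, inductive construction of $a_2,\ldots,a_d$, and verification of the $i=1$ colon conditions via Lemma~\ref{W}. The only difference is the base case $d=1$: the paper quotes Trung's results on standard parameter ideals in generalized Cohen--Macaulay modules, whereas you verify $(0):_Ma_1^2=(0):_Ma_1$ directly from $(\dagger)$, which is a cleaner and more self-contained argument.
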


\begin{proof}
We proceed by induction on $d$.
Suppose that $d=1$. Then since $M$ is generalized Cohen-Macaulay and the ideal $Q$ is standard $($\cite[Theorem 2.1]{T}$)$, $a_1$ certainly forms a $d$-sequence on $M$ (\cite[Proposition 2.7]{T}). Assume that $d \geq 2$ and that our assertion holds true for $d-1$. Set $\overline{M}=M/a_1M$, $\overline{A}=A/(a_1)$, and $\overline{Q}=Q/(a_1)$.
Then since
$$ \chi_1(Q;M)= \chi_1(\overline{Q};\overline{M}) \leq \hdeg_Q(\overline{M}) -\e_Q^0(\overline{M}) \leq \hdeg_Q(M)-\e_Q^0(M)=\chi_1(Q;M) $$
by Lemma \ref{/a} and Proposition \ref{euler}, we have $ \chi_1(\overline{Q};\overline{M})=\hdeg_Q(\overline{M}) -\e_Q^0(\overline{M}) $. Because the residue class field $A/\m$ of $A$ is infinite, we may also choose an element $a_2 \in Q$ so that $a_2$ is  superficial for $\overline{M}$ with respect to $\overline{Q}$, $\hdeg_Q(\overline{M}/a_2\overline{M}) \leq \hdeg_Q(\overline{M})$ ~(Lemma \ref{superficial1}), and $a_1, a_2$ forms, furthermore, a part of a minimal system of generators of $Q$.
Then the hypothesis of induction on $d$ guarantees  that there exist elements $a_3,a_4,\ldots,a_d \in A$ such that $\overline{Q}=(a_2,a_3,\ldots,a_d)\overline{A}$ and $a_2,a_3,\ldots,a_d$ forms a $d$-sequence on $\overline{M}$.
Hence 
$$ [(a_2,a_3,\ldots,a_{i-1})\overline{M}:_{\overline{M}}a_ia_j]= [(a_2,a_3,\ldots,a_{i-1})\overline{M}:_{\overline{M}}a_j],$$
so that
$$ [(a_1,a_2,\ldots,a_{i-1}){M}:_{M}a_ia_j]= [(a_1,a_2,\ldots,a_{i-1}){M}:_{M}a_j]$$
for all $2 \leq i \leq j \leq d$. It is now enough to show that $[(0):_M a_1a_j]=[(0):_M a_j]$ for all $1 \leq j \leq d$.
Take $m \in [(0):_M a_1a_j]$. Then $a_1a_jm=0$.
Then by Lemma \ref{W}  $$a_jm \in [(0):_M a_1] \cap QM \subseteq W \cap QM=(0),$$ because $a_1$ is superficial for $M$.
Hence $m \in [(0):_M a_j]$, so that $[(0):_M a_1a_j] \subseteq [(0):_M a_j]$.
Thus $[(0):_M a_1a_j]=[(0):_M a_j]$ for all $1 \leq j \leq d$. Hence $a_1,a_2,\ldots,a_d$ forms a $d$-sequence on $M$.
\end{proof}

The following result is due to \cite{GNi}. See, for example, \cite[Proposition 3.1]{MSV} for a proof.

\begin{lem}\label{d=1}$(${\cite[Lemma 2.4]{GNi}}$)$
Let $M$ be a finitely generated $A$-module with $\dim_AM=1$. Then $\e_Q^1(M)=-\ell_A(\H_{\m}^0(M))$ for every parameter ideal $Q$ of $M$.
\end{lem}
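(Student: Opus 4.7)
\begin{pf}[Proof proposal for Lemma \ref{d=1}]
Since $\dim_A M = 1$ and $A/\m$ is infinite, we may write $Q = (a)$ where $a$ is a parameter for $M$. Set $W = \H_\m^0(M)$ and $M' = M/W$. The plan is to compute the Hilbert--Samuel polynomial of $M$ with respect to $Q$ directly by breaking $M$ into the finite-length piece $W$ and the Cohen--Macaulay piece $M'$, and then to read off $\e_Q^1(M)$ by comparing with the formula $\ell_A(M/Q^{n+1}M) = \e_Q^0(M)(n+1) - \e_Q^1(M)$ valid for $n \gg 0$.

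First I would observe that $M'$ is a Cohen--Macaulay $A$-module of dimension $1$, so $a$ is $M'$-regular and $\ell_A(M'/a^{n+1}M') = (n+1)\, \e_Q^0(M')$ for all $n \ge 0$; in particular $\e_Q^0(M') = \e_Q^0(M)$. Next, from the canonical exact sequence $0 \to W \to M \to M' \to 0$ I would extract, for each $n \ge 0$, the exact sequence
\[
0 \to W/(a^{n+1}M \cap W) \to M/a^{n+1}M \to M'/a^{n+1}M' \to 0.
\]
The key step is to show that $a^{n+1}M \cap W = (0)$ for all $n \gg 0$. By the Artin--Rees lemma there exists $k \ge 0$ such that $a^{n+1}M \cap W = a^{n+1-k}(a^k M \cap W)$ for every $n \ge k$; since $W$ has finite length, there is $n_0$ with $a^{n_0} W = 0$, and then for $n$ sufficiently large the right-hand side lies in $a^{n+1-k}W = 0$.

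Combining these facts gives $\ell_A(M/a^{n+1}M) = \ell_A(W) + (n+1)\,\e_Q^0(M)$ for $n \gg 0$. Matching coefficients against $\e_Q^0(M)(n+1) - \e_Q^1(M)$ yields $\e_Q^1(M) = -\ell_A(W) = -\ell_A(\H_\m^0(M))$. The only mildly delicate point is the Artin--Rees step, which ensures that the contribution of the torsion submodule $W$ to the Hilbert--Samuel polynomial stabilises at its full length $\ell_A(W)$; the rest is a straightforward length count using the Cohen--Macaulayness of $M'$.
\end{pf}
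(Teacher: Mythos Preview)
The paper does not supply its own proof of this lemma; it simply attributes the result to \cite{GNi} and points to \cite[Proposition 3.1]{MSV} for an argument. Your proof is correct and is essentially the standard one: split $M$ via $0 \to W \to M \to M' \to 0$ with $W = \H_\m^0(M)$, use that $M' = M/W$ is Cohen--Macaulay of dimension~$1$ so that $\ell_A(M'/a^{n+1}M') = (n+1)\,\e_Q^0(M)$ for all $n \ge 0$, and apply Artin--Rees to see that $Q^{n+1}M \cap W = (0)$ for $n \gg 0$, whence the constant term of the Hilbert polynomial is exactly $\ell_A(W)$. One cosmetic remark: the hypothesis that $A/\m$ is infinite plays no role here, since a parameter ideal for a one-dimensional module is principal by definition.
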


The following result is, more or less, known. Let us indicate a brief proof for the sake of completeness, because it plays a key role in our proof of Theorem \ref{main1}.

\begin{prop}[{cf. \cite[Proposition 3.4]{GO2}}]\label{d-poly}
Let $M$ be a finitely generated $A$-module with $d = \dim_AM$. Let  $Q=(a_1,a_2,\ldots, a_d)$ be a parameter ideal of $A$ and assume that  $a_1,a_2,\ldots,a_d$ forms a $d$-sequence on $M$.
Then we have the following, where $Q_i =(a_1,a_2,\ldots,a_i)$ for each $0 \leq i \leq d$.
\begin{itemize}
\item[$(1)$] $\e_Q^0(M)=\ell_A(M/QM)-\ell_A \left([Q_{d-1}M:_Ma_d]/Q_{d-1}M\right)$.
\item[$(2)$] $(-1)^i\e_Q^i(M) = \ell_A(\H_{\m}^0(M/Q_{d-i}M))-\ell_A(\H_{\m}^0(M/Q_{d-i-1}M))$ for $1 \leq i \leq d-1$ and $(-1)^{d}\e_Q^{d}(M) = \ell_A(\H_{\m}^0(M))$.
\item[$(3)$] $\ell_A(M/Q^{n+1}M)=\sum_{i=0}^d(-1)^i\e_Q^i(M)\binom{n+d-i}{d-i}$ for all $n \geq 0$.
\end{itemize}
\end{prop}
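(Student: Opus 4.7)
The plan is induction on $d = \dim_A M$, with the base case $d = 0$ immediate (then $Q$ is nilpotent on $M$, so $M = \rmH_\m^0(M)$ and all three assertions are trivial). For $d \geq 1$, I would concentrate the algebra into two standard consequences of the $d$-sequence hypothesis, and then reduce to the induction hypothesis modulo $a_1$.

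The first input is the filtration identity
\[
(Q_i M :_M a_{i+1})/Q_i M \;=\; \rmH_\m^0(M/Q_i M), \qquad 0 \le i \le d-1.
\]
The inclusion $\subseteq$ iterates the defining $d$-sequence identity $(Q_i M :_M a_{i+1} a_j) = (Q_i M :_M a_j)$ for $j > i$ to give $(Q_i M :_M a_{i+1}) \subseteq (Q_i M :_M a_j^n)$ for all $n$; since $(a_{i+1},\ldots,a_d)$ is $\m$-primary on $M/Q_i M$, this places the quotient inside the local cohomology. The reverse inclusion uses the $i = j$ case $(Q_i M :_M a_{i+1}^n) = (Q_i M :_M a_{i+1})$ to conclude that $a_{i+1}$ annihilates $\rmH_\m^0(M/Q_i M)$. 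Setting $i = d-1$ recasts (1) in the uniform form $\e_Q^0(M) = \ell_A(M/QM) - \ell_A(\rmH_\m^0(M/Q_{d-1}M))$, consistent with (2); setting $i = 0$ gives $(0 :_M a_1) = \rmH_\m^0(M)$, which will pin down the bottom Hilbert coefficient. The second input is Huneke's classical colon formula for $d$-sequences:
\[
Q^{n+1} M :_M a_1 \;=\; Q^n M + (0 :_M a_1)\quad (n \ge 0), \qquad (0 :_M a_1) \cap Q M \;=\; 0.
\]

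For the induction step, set $\overline{M} = M/a_1 M$ and $\overline{Q} = Q/(a_1)$; then $a_2,\ldots,a_d$ is a $d$-sequence on $\overline{M}$, and $\overline{M}/\overline{Q}_j \overline{M} \cong M/Q_{j+1}M$ canonically. The four-term exact sequence
\[
0 \to (Q^{n+1}M :_M a_1)/Q^n M \to M/Q^n M \xrightarrow{a_1} M/Q^{n+1}M \to \overline{M}/\overline{Q}^{n+1}\overline{M} \to 0,
\]
together with the colon identity, yields for $n \ge 1$
\[
\ell_A(M/Q^{n+1}M) - \ell_A(M/Q^n M) \;=\; \ell_A(\overline{M}/\overline{Q}^{n+1}\overline{M}) - \ell_A(\rmH_\m^0(M)).
\]
Telescoping from $n = 0$, inserting the polynomial expression for $\ell_A(\overline{M}/\overline{Q}^{n+1}\overline{M})$ supplied by the induction hypothesis, and collapsing the binomial sums using $\sum_{m=0}^{n}\binom{m+k-1}{k-1} = \binom{n+k}{k}$ produces the polynomial shape in (3), already valid at $n = 0$. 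Matching coefficients yields (2): the top coefficient is $\e_{\overline{Q}}^0(\overline{M}) = \e_Q^0(M)$; the intermediate ones transfer from the induction hypothesis applied to $\overline{M}$ via the isomorphism $\overline{M}/\overline{Q}_j\overline{M} \cong M/Q_{j+1}M$; and the residual constant $\ell_A(\rmH_\m^0(M))$ lands in the bottom slot as $(-1)^d \e_Q^d(M)$.

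The main obstacle will be the combinatorial bookkeeping in the telescoping step: reindexing so the local cohomology lengths $\ell_A(\rmH_\m^0(M/Q_j M))$ line up against the binomials $\binom{n+d-i}{d-i}$ of (2), with particular care needed at the boundary indices $i = 0$ and $i = d$. All the substantive algebraic content is concentrated in the two auxiliary identities; once those are in hand, everything else is a Pascal-triangle calculation, and (1) drops out as the leading coefficient.
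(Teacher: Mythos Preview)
Your approach is correct but genuinely different from the paper's. The paper does not induct on $d$ at all: it first replaces $A$ by a Cohen--Macaulay complete local ring $B$ surjecting onto $A$ (carrying preimages $\alpha_i$ of the $a_i$), then forms the idealization $C = A \ltimes M$, observes that $a_1,\ldots,a_d$ is again a $d$-sequence on $C$ because it is a regular sequence on the Cohen--Macaulay ring $A$, and finally quotes \cite[Proposition~3.4]{GO2} for the ring $C$. Since $\ell_A(C/Q^{n+1}C) = \ell_A(A/Q^{n+1}) + \ell_A(M/Q^{n+1}M)$ and $A$ is Cohen--Macaulay, subtracting off the $A$-contribution $\ell_A(A/Q)\binom{n+d}{d}$ transfers all three assertions from $C$ to $M$. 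Your argument, by contrast, is self-contained: the identity $(Q_iM:_M a_{i+1})/Q_iM = \rmH_\m^0(M/Q_iM)$ and Huneke's colon formula feed into the four-term sequence to give the recursion $\ell_A(M/Q^{n+1}M) - \ell_A(M/Q^nM) = \ell_A(\overline{M}/\overline{Q}^{n+1}\overline{M}) - \ell_A(\rmH_\m^0(M))$, and the binomial telescoping does the rest. The paper's proof is shorter but outsources all the content to \cite{GO2}; yours actually exhibits the mechanism linking the $d$-sequence structure to the Hilbert polynomial. One small adjustment: in the paper's standing hypotheses $d = \dim A > 0$, so your induction should bottom out at $d=1$ rather than $d=0$; this case follows at once from your first identity together with Lemma~\ref{d=1} and the intersection $(0:_Ma_1)\cap a_1M = 0$.
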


\begin{proof}
Since  $A$ is complete, there exists a surjective homomorphism  $\varphi:B \to A$ of rings, where $B$ is a Cohen-Macaulay complete local ring with $\dim B=\dim A$ and a system $\alpha_1,\alpha_2,\ldots,\alpha_d$ of parameters of $B$ such that $\varphi(\alpha_i)=a_i$ for all $1 \leq i \leq d$.
Therefore, passing to the ring $B$, we may assume that $A$ is a Cohen-Macaulay ring. Let $C=A \ltimes M$ denote the idealization of $M$ over $A$.
Then $C$ is a Noetherian local ring with maximal ideal $\n=\m \ltimes M$ and $\dim C= d$. We have
$$\ell_A(C/Q^{n+1}C) = \ell_A(A/Q^{n+1}) + \ell_A(M/Q^{n+1}M)$$
for all $n \geq 0$ and  $a_1,a_2,\ldots,a_d$ forms a $d$-sequence on $C$, because $A$ is a Cohen-Macaulay ring.
Hence
\begin{eqnarray*}
\ell_A(M/Q^{n+1}M) &=&  \ell_C(C/Q^{n+1}C) - \ell_A(A/Q^{n+1})\\
&=& \sum_{i=0}^d(-1)^i\e_{QC}^i(C)\binom{n+d-i}{d-i} - \ell_A(A/Q)\binom{n+d}{d}
\end{eqnarray*}
for all $n \geq 0$ by \cite[Proposition 3.4]{GO2}.
Therefore $\e_{Q}^0(M)=\e_{QC}^0(C)-\ell_A(A/Q)$ and $\e_{Q}^i(M)=\e_{QC}^i(C)$ for all $1 \leq i \leq d$. Since $a_1,a_2,\ldots,a_d$ is a $d$-sequence on $C$ and $A$ is a Cohen-Macaulay ring, by \cite[Proposition 3.4]{GO2} we get 
\begin{eqnarray*}
\e_{QC}^0(C)&=&\ell_C(C/QC)-\ell_C \left([Q_{d-1}C:_Ca_d]/Q_{d-1}C\right)\\
&=&\{\ell_A(A/Q)+\ell_A(M/QM)\}-\ell_A \left([Q_{d-1}M:_Ma_d]/Q_{d-1}M\right),
\end{eqnarray*}
so that 
\begin{eqnarray*}
\e_{Q}^0(M)&=&\e_{QC}^0(C)-\ell_A(A/Q)\\
&=& \ell_A(M/QM)-\ell_A \left([Q_{d-1}M:_Ma_d]/Q_{d-1}M\right),
\end{eqnarray*}
\begin{eqnarray*}
(-1)^i\e_Q^i(M)=(-1)^i\e_{QC}^i(C) &=& \ell_C(\H_{\n}^0(C/Q_{d-i}C))-\ell_C(\H_{\n}^0(C/Q_{d-i-1}C))\\
&=&\ell_A(\H_{\m}^0(M/Q_{d-i}M))-\ell_A(\H_{\m}^0(M/Q_{d-i-1}M))
\end{eqnarray*}
for $1 \leq i \leq d-1$, and 
$$(-1)^d\e_Q^d(M)=(-1)^{d}\e_{QC}^{d}(C) = \ell_C(\H_{\n}^0(C))=\ell_A(\H_{\m}^0(M)).$$
Hence the result follows.
\end{proof}

We are now in a position to prove Theorem \ref{main1}.

\begin{proof}[Proof of Theorem \ref{main1}]
$(1) \Rightarrow (2)$
Since the last assertion (i) follows from Proposition \ref{d-seq}, we have assertion (b) by Proposition \ref{d-poly}. It is now enough  to show that assertion (a) holds true. We proceed by induction on $d$.
Thanks to Lemma \ref{d=1}, we may assume that $d \geq 2$ and that our assertion holds true for $d-1$.
Choose an element $a \in Q \backslash \m Q$ so that $a$ is  superficial  for $M$ and $M_j$ with respect to $Q$ and $\hdeg_Q(M_j/aM_j) \leq \hdeg_Q(M_j)$ for all $1 \leq j \leq d-1$ (Lemma \ref{superficial1}). We set  $\overline{M}=M/aM$ and $\overline{Q}=Q/(a)$. Then by the same argument as is in the proof of Lemma \ref{superficial1}, we get the inequalities
\begin{eqnarray*}
\hdeg_Q(\overline{M}_j) &\leq& \hdeg_Q([(0):_{M_j}a]) + \hdeg_Q(M_{j+1}/aM_{j+1})
\end{eqnarray*}
and
$$\ell_A([(0):_{M_j}a]) \leq \hdeg_Q(M_j)$$
for all $0 \leq j \leq d-2$.
Hence 
\begin{eqnarray*}
\chi_1(Q;M) = \chi_1(\overline{Q};\overline{M}) &\leq& \hdeg_{Q}(\overline{M}) -\e_Q^0(\overline{M})\\
&=& \sum_{j=0}^{d-2}\binom{d-2}{j}\hdeg_Q(\overline{M}_j)\\
&\leq& \sum_{j=0}^{d-2}\binom{d-2}{j}\{ \hdeg_Q([(0):_{M_j}a]) + \hdeg_Q(M_{j+1}/aM_{j+1})  \}\\
&\leq& \sum_{j=0}^{d-2}\binom{d-2}{j}\{ \hdeg_Q(M_j)+\hdeg_Q(M_{j+1}) \}\\
&=& \sum_{j=0}^{d-1}\binom{d-1}{j}\hdeg_Q(M_j)\\
&=& \hdeg_Q(M) - \e_Q^0(M)=\chi_1(Q;M),
\end{eqnarray*}
because $\chi_1(Q;M)=\chi_1(\overline{Q};\overline{M})$ by Lemma \ref{/a} and $\chi_1(\overline{Q};\overline{M}) \leq \hdeg_{Q}(\overline{M})-\e_Q^0(\overline{M})$ by Proposition \ref{euler}.
Thus $$\chi_1(\overline{Q};\overline{M})=\hdeg_Q(\overline{M})-\e_Q^0(\overline{M}),$$ $$\hdeg_Q(\overline{M}_j)=\hdeg_Q(M_j)+\hdeg_Q(M_{j+1}),$$ and $aM_j=(0)$ for all $0 \leq j \leq d-2$. On the other hand, since $a$ is superficial for $M$ with respect to $Q$, we have $\e_Q^i(M)=\e_Q^i(\overline{M})$ for all $0 \leq i \leq d-2$ and $(-1)^{d-1}\e_Q^{d-1}(M)=(-1)^{d-1}\e_Q^{d-1}(\overline{M})-\ell_A([(0):_M a])$ (\cite[(22.6)]{N}). Therefore the hypothesis of induction on $d$ yields that
\begin{eqnarray*}
(-1)^i\e_Q^i(M)=(-1)^i\e_Q^i(\overline{M})&=&\rmT_Q^i(\overline{M})\\
&=& \sum_{j=1}^{d-1-i}\binom{d-i-2}{j-1}\hdeg_Q(\overline{M}_j)\\
&=& \sum_{j=1}^{d-1-i}\binom{d-i-2}{j-1}\{\hdeg_Q(M_j)+\hdeg_Q(M_{j+1})\}\\
&=& \sum_{j=1}^{d-i}\binom{d-i-1}{j-1}\hdeg_Q(M_j)\\
&=& \rmT_Q^i(M)
\end{eqnarray*}
for $1 \leq i \leq d-2$ and that 
\begin{eqnarray*}
(-1)^{d-1}\e_Q^{d-1}(M)&=&(-1)^{d-1}\e_Q^{d-1}(\overline{M})-\ell_A([(0):_M a])\\
&=& \ell_A(\H_{\m}^0(\overline{M}))-\ell_A(\H_{\m}^0(M))\\
&=& \{\hdeg_Q(M_0)+\hdeg_Q(M_1)\}-\hdeg_Q(M_0)\\
&=&\hdeg_Q(M_1)\\
&=&\rmT_Q^{d-1}(M),
\end{eqnarray*}
because $a\H_{\m}^0(M)=(0)$ and
$\ell_A(\H_{\m}^0(\overline{M})) = \hdeg_Q(\overline{M}_0)=\hdeg_Q(M_0)+\hdeg_Q(M_1).$
Thus, as the equality $ (-1)^d\e_Q^d(M)=\ell_A(\H_{\m}^0(M))$ holds true by Proposition \ref{d-poly}, assertion (a) follows, which proves the implication $(1) \Rightarrow (2)$.

$(2) \Rightarrow (1)$
We have
\begin{eqnarray*}
\ell_A(M/QM) &=& \sum_{i=0}^d(-1)^i\e_Q^i(M)\\
&=& \e_Q^0(M)+\sum_{i=1}^{d-1}(-1)^i\e_Q^i(M)+(-1)^d\e_Q^d(M)\\
&=& \e_Q^0(M)+\sum_{i=1}^{d-1}\rmT_Q^i(M)+\ell_A(\H_{\m}^0(M))
\end{eqnarray*}
by conditions (a) and (b). Hence $\ell_A(\H_{\m}^0(M))=\hdeg_Q(M_0)$ and
\begin{eqnarray*}
\sum_{i=1}^{d-1}\rmT_Q^i(M)&=&\sum_{i=1}^{d-1}\sum_{j=1}^{d-i}\binom{d-i-1}{j-1}\hdeg_Q(M_j)\\
&=& \sum_{j=1}^{d-1}\left\{\sum_{i=1}^{d-j}\binom{d-i-1}{j-1}\right\}\hdeg_Q(M_j)\\
&=& \sum_{j=1}^{d-1}\left\{\sum_{i=1}^{d-j}\left[\binom{d-i}{j}-\binom{d-i-1}{j}\right]\right\}\hdeg_Q(M_j)\\
&=& \sum_{j=1}^{d-1}\left\{\sum_{i=1}^{d-j}\binom{d-i}{j}-\sum_{i=1}^{d-j-1}\binom{d-i-1}{j}\right\}\hdeg_Q(M_j)\\
&=& \sum_{j=1}^{d-1}\left\{\sum_{i=1}^{d-j}\binom{d-i}{j}-\sum_{i=2}^{d-j}\binom{d-i}{j}\right\}\hdeg_Q(M_j)\\
&=& \sum_{j=1}^{d-1}\binom{d-1}{j}\hdeg_Q(M_j).
\end{eqnarray*}
Thus  
\begin{eqnarray*}
\chi_1(Q;M)=\ell_A(M/QM)-\e_Q^0(M)&=& \sum_{i=1}^{d-1}\rmT_Q^i(M)+\ell_A(\H_{\m}^0(M))\\
&=& \sum_{j=0}^{d-1}\binom{d-1}{j}\hdeg_Q(M_j)\\
&=&\hdeg_Q(M)-\e_Q^0(M),
\end{eqnarray*}
which shows the implication $(2) \Rightarrow (1)$.

We now consider assertion (ii). 
We get $QM \cap \H_{\m}^0(M)=(0)$ by Lemma \ref{W}.
Suppose that $d \geq 3$.
Let $Q=(a_1,a_2,\ldots,a_d)$ and $1 \leq i \leq d$. Since the residue class field $A/\m$ of $A$ is infinite, we may choose the elements $a'_is$ so that $a_i$ is   superficial  for $M$ and $M_j$ with respect to $Q$ and $\hdeg_Q(M_j/a_iM_j) \leq \hdeg_Q(M_j)$ for all $1 \leq j \leq d-1$. Then, thanks to the proof of the implication $(1) \Rightarrow (2)$, $a_iM_j=(0)$ for all $1 \leq j \leq d-2$.
Consequently, by the symmetry of  $a'_is$, $Q\H_{\m}^j(M)=(0)$ for all $1 \leq j \leq d-2$, which proves  assertion (ii) and Theorem \ref{main1}.
\end{proof}

We close this section with the following example of  parameter ideals $Q$ such that $\chi_1(Q;A)=\hdeg_Q(A)-\e_Q^0(A)$ but  $A$ is not a generalized Cohen-Macaulay ring.

\begin{ex}\label{ex1}
Let $\ell \geq 2$ and $m \geq 1$ be integers.
Let $$S=k[[X_i,Y_i,Z_j \ | \ 1 \leq i \leq \ell, 1 \leq j \leq m]]$$
be the formal power series ring with $2\ell+m$ indeterminates over an infinite field $k$. Let $$A=S/(X_1,X_2,\ldots,X_{\ell}) \cap (Y_1,Y_2,\ldots,Y_{\ell}),$$
$$\m=(x_i,y_i,z_j \ | \ 1 \leq i \leq \ell, 1 \leq j \leq m)A, ~~\text{and}$$ 
$$Q=(x_i-y_i \ | \ 1 \leq i \leq \ell)A+(z_j \ | \ 1 \leq j \leq m)A,$$
where $x_i$, $y_i$, and $z_j$ denote the images of $X_i$, $Y_i$, and $Z_j$ in $A$ respectively.
Then  $\m^2=Q\m$, whence  $Q$ is a reduction of $\m$. We furthermore have the following:
\begin{itemize}
\item[$(1)$] $A$ is an unmixed local ring with $\dim A=\ell+m$, $\depth A=m+1$, and $\H_{\m}^{m+1}(A)$ is not  finitely generated.
\item[$(2)$] $\ell_A(A/Q)=\ell+1$, $\e_Q^0(A)=2$, and hence $\chi_1(Q;A)=\ell-1$.
\item[$(3)$] $\hdeg_Q(A)=2+\binom{\ell+m-1}{m+1}$.
\item[$(4)$] Hence $\chi_1(Q;A) = \hdeg_Q(A)-\e_Q^0(A)$, if $\ell=2$.
\end{itemize}
\end{ex}

\begin{proof}
Set $\a_1=(X_i \ | \ 1 \leq i \leq \ell)$ and $\a_2=(Y_i \ | \ 1 \leq i \leq \ell)$ and consider the exact sequence
$$ 0 \to A \to S/\a_1 \times S/\a_2 \to S/[\a_1+\a_2] \to 0$$
of $S$-modules. 
Then because $$S/\a_1 \cong k[[Y_i, Z_j \ | \ 1 \leq i \leq \ell, 1 \leq j \leq m]],$$ 
$$S/\a_2 \cong k[[X_i, Z_j \ | \ 1 \leq i \leq \ell, 1 \leq j \leq m]], \ \ \ \mbox{and}$$ 
$$S/[\a_1+\a_2] \cong k[[Z_j \ | \ 1 \leq j \leq m]],$$ 
we get $\dim A=\ell+m$, $\H_{\m}^{m+1}(A) \cong \H_{\m}^m(S/[\a_1+\a_2])$, and $\H_{\m}^j(A)=0$ for all $j \neq m+1$, $\ell+d$.
Hence 
$$\hdeg_Q(A_{m+1})=\hdeg_Q(S/[\a_1+\a_2])=\e_Q^0(S/[\a_1+\a_2])=\e_{\m}^0(S/[\a_1+\a_2])=1$$ 
and $\hdeg_Q(A_j)=0$ for all $0 \leq j \leq \ell+m-1$ such that  $j \neq m+1$.
Therefore, since $\e_Q^0(A)=\e_{\m}^0(A)=2$, we get
\begin{eqnarray*}
\hdeg_Q(A) &=& \e_Q^0(A)+\sum_{j=0}^{\ell+m-1}\binom{\ell+m-1}{j}\hdeg_Q(A_j)\\
&=& 2+\binom{\ell+m-1}{m+1}, 
\end{eqnarray*}
while $$ \chi_1(Q;A)=\ell_A(A/Q)-\e_Q^0(A)=(\ell+1)-2=\ell-1,$$
because $\ell_A(A/Q)=\ell+1$.
\end{proof}

\section{The first Hilbert coefficients versus the homological torsions}

The purpose of this section is to estimate  the first Hilbert coefficients of parameters in terms of the homological torsions of modules. The following inequality is given by \cite{GhGHOPV2}. We indicate a brief proof for the sake of completeness.

\begin{prop}\label{e1}$(${\cite[Theorem 6.6]{GhGHOPV2}}$)$
Suppose that $d \ge 2$ and let $Q$ be a parameter ideal of $A$.
Then 
$$
\rme_Q^1(M) \ge -\rmT_Q^1(M)
$$
for every finitely generated $A$-module $M$ with $d = \dim_A M$.
\end{prop}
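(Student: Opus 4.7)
My plan is induction on $d = \dim_A M$, with $d = 2$ as the base case. The unifying strategy is to kill a carefully chosen superficial element $a \in Q \setminus \fkm Q$, pass to $\overline{M} = M/aM$, and exploit the invariance properties already developed in Section 2.

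For the base case $d = 2$, I would choose $a$ superficial for $M$ and simultaneously for $M_1$ with respect to $Q$, arranged so that Lemma \ref{superficial1} yields $\hdeg_Q(M_1/aM_1) \leq \hdeg_Q(M_1) = \rmT_Q^1(M)$. Since $\dim_A \overline{M} = 1$, Lemma \ref{d=1} gives $\e_Q^1(\overline{M}) = -\ell_A(\rmH_\fkm^0(\overline{M}))$, and Nagata's formula (as quoted in the excerpt) yields $\e_Q^1(M) = \e_Q^1(\overline{M}) + \ell_A([(0):_M a])$. The short exact sequence $0 \to M_1/aM_1 \to \overline{M}_0 \to (0):_{M_0} a \to 0$ produced in the proof of Lemma \ref{superficial1} breaks $\ell_A(\rmH_\fkm^0(\overline{M}))$ as $\ell_A(M_1/aM_1) + \ell_A((0):_{M_0} a)$. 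Now $\ell_A([(0):_M a]) = \ell_A((0):_{M_0} a)$, since both equal $\ell_A(\rmH_\fkm^0(M)/a\rmH_\fkm^0(M))$ by Matlis duality applied to the finite length module $\rmH_\fkm^0(M)$ together with the identification $[(0):_M a] = (0):_{\rmH_\fkm^0(M)} a$ (recall $a$ is superficial). These two contributions cancel, leaving $\e_Q^1(M) = -\ell_A(M_1/aM_1) \geq -\hdeg_Q(M_1) = -\rmT_Q^1(M)$, as required.

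For the inductive step $d \geq 3$, assume the statement in dimension $d - 1$. Choose $a$ superficial for $M$ so that Lemma \ref{superficial2} (applied with $i = 1 \leq d - 2$) gives $\rmT_Q^1(\overline{M}) \leq \rmT_Q^1(M)$. Because $1 \leq d - 2$, the standard invariance of Hilbert coefficients under superficial reduction (again Nagata) yields $\e_Q^1(M) = \e_Q^1(\overline{M})$. Applying the inductive hypothesis to $\overline{M}$, whose dimension is $d - 1 \geq 2$, gives $\e_Q^1(M) = \e_Q^1(\overline{M}) \geq -\rmT_Q^1(\overline{M}) \geq -\rmT_Q^1(M)$.

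The real obstacle is the base case $d = 2$: one must notice that the $\ell_A([(0):_M a])$ contribution from the change of $\e_Q^1$ under superficial reduction exactly cancels the $\ell_A((0):_{M_0} a)$ term from the Matlis-dual sequence, which forces a Matlis-duality length identity on a finite length module. Once this cancellation is recognized, the argument reduces to the bound $\hdeg_Q(M_1) \geq \ell_A(M_1/aM_1)$ from Lemma \ref{superficial1}, and the inductive step is almost automatic from Lemma \ref{superficial2}.
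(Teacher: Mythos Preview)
Your argument is correct and follows the same inductive scheme as the paper, with the inductive step $d\ge 3$ matching verbatim. The only difference is in the base case $d=2$: rather than tracking the cancellation between $\ell_A([(0):_Ma])$ and $\ell_A((0):_{M_0}a)$, the paper first passes to $M' = M/\rmH_\fkm^0(M)$, noting that $\e_Q^1(M)=\e_Q^1(M')$ and $\rmT_Q^1(M)=\rmT_Q^1(M')$, so one may assume $\depth_A M>0$. Then $a$ is $M$-regular, $\e_Q^1(M)=\e_Q^1(\overline{M})$ with no correction term, and the Matlis-dual sequence collapses to an isomorphism $M_1/aM_1 \cong \overline{M}_0$, bypassing the length identity you invoke. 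Your cancellation is valid (it rests on $(0):_Ma\subseteq \rmH_\fkm^0(M)$ and the equality of kernel and cokernel lengths for an endomorphism of a finite-length module), but the depth reduction is the cleaner way to reach the same endpoint $\e_Q^1(M)=-\ell_A(M_1/aM_1)$.
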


\begin{proof}
We proceed by induction on $d$.
Let $M' = M/\rmH_\m^0(M)$.
Then, since $\e_Q^1(M) = \e_Q^1(M')$ and $\rmT_Q^1(M) = \rmT_Q^1(M')$, to see that $\rme_Q^1(M) \ge -\rmT_Q^1(M)$, we may assume, passing to $M'$, that $\depth_AM >0$. Suppose that $d=2$. Choose $a \in Q \setminus \fkm Q$ so that $a$ is superficial for $M$ and $M_1$ with respect to $Q$ and $\hdeg_Q(M_1 / aM_1) \le \hdeg_Q (M_1)$. Set $\overline{M} = M/aM$. Then since $a$ is $M$--regular, we get the exact sequence
$$ 0 \to \H_{\m}^0(\overline{M}) \to \H_{\m}^1(M) \overset{a}{\to} \H_{\m}^1(M) $$
of local cohomology modules. Taking the Matlis dual, we get an isomorphism $M_1/aM_1 \cong \overline{M}_0$ and hence, because $\e_Q^1(\overline{M})=-\ell_A(\H_{\m}^0(\overline{M}))$ by Lemma \ref{d=1}, we have
\begin{eqnarray*}
\e_Q^1(M) = \e_Q^1(\overline{M}) &=& - \ell_A(\H_{\m}^0(\overline{M}))\\
&=& -\hdeg_Q(\overline{M}_0)\\
&=& - \hdeg_Q(M_1/aM_1)\\
&\geq& - \hdeg_Q(M_1)\\
&=&-\rmT_Q^1(M).
\end{eqnarray*}
Suppose that $d \geq 3$ and that our assertion holds true for $d-1$. Choose $a \in Q \setminus \fkm Q$ so that $a$ is superficial for $M$ with respect to $Q$ and $\rmT_Q^1(\overline{M}) \le \rmT_Q^1(M)$ (Lemma \ref{superficial2}). Then the hypothesis of induction on $d$ shows
$$\rme_Q^1(M)=\rme_Q^1(\overline{M}) \geq -\rmT_Q^1(\overline{M}) \geq - \rmT_Q^1(M),$$ 
as wanted.
\end{proof}

The first Hilbert coefficients $\e_Q^1(M)$ of parameter ideals are bounded below by the homological torsion $-\rmT_Q^1(M)$. It is now natural to ask what happens on the parameters $Q$ of $M$, once the equality $\e_Q^1(M)=-\rmT_Q^1(M)$ is attained. The main result of this section answers the question and is stated as follows. Recall that  a finitely generated $A$-module $M$ is said to be unmixed, if $\dim A/\p=\dim_AM$ for all $\p \in \Ass_{A}M$.

\begin{thm}\label{main2}
Let $M$ be a finitely generated $A$-module with $d = \dim_AM \geq 2$ and suppose that $M$ is unmixed. 
Let $Q$ be a parameter ideal of $A$. 
Then the following conditions are equivalent:
\begin{itemize}
\item[$(1)$] $\chi_1(Q;M)=\hdeg_Q(M)-\e_Q^0(M)$.
\item[$(2)$] $\e_Q^1(M)=-\rmT_Q^1(M)$.
\end{itemize}
When this is the case, we have the following$\mathrm{:}$
\begin{itemize}
\item[$(\mathrm{i})$] $(-1)^i\e_Q^i(M)=\rmT_Q^i(M)$ for $2 \leq i\leq d-1$ and $\e_Q^d(M)=0$.
\item[$(\mathrm{ii})$] There exist elements $a_1,a_2,\ldots,a_d \in A$ such that $Q=(a_1,a_2,\ldots,a_d)$ and $a_1,a_2,\ldots,a_d$ forms a $d$-sequence on $M$.
\item[$(\mathrm{iii})$] $Q\H_{\m}^i(M)=(0)$ for all $1 \leq i \leq d-2$.
\end{itemize}
\end{thm}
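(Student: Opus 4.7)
The implication $(1)\Rightarrow(2)$ is immediate from Theorem \ref{main1}: condition (2)(a) of that theorem, specialized to $i=1$, gives $\e_Q^1(M)=-\rmT_Q^1(M)$. The additional conclusions (i)--(iii) also follow at once from Theorem \ref{main1}, since $M$ unmixed with $d\ge 2$ forces $\H_\m^0(M)=(0)$, so $\e_Q^d(M)=0$ and $QM\cap\H_\m^0(M)=(0)$ hold automatically. The real content lies in the reverse implication $(2)\Rightarrow(1)$, which my plan treats by induction on $d$. Since $\H_\m^0(M)=(0)$ we have $M_0=(0)$, hence $\hdeg_Q(M)-\e_Q^0(M)=\sum_{j=1}^{d-1}\binom{d-1}{j}\hdeg_Q(M_j)$ while $\rmT_Q^1(M)=\sum_{j=1}^{d-1}\binom{d-2}{j-1}\hdeg_Q(M_j)$; both reduce to $\hdeg_Q(M_1)$ when $d=2$.

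In the base case $d=2$, I would choose $a\in Q\setminus\m Q$ that is $M$--regular (available because $\depth M>0$), superficial for $M$ and $M_1$, and satisfies $\hdeg_Q(M_1/aM_1)\le\hdeg_Q(M_1)$ (Lemma \ref{superficial1}). Because $a$ is $M$--regular, the long exact sequence of local cohomology together with Matlis duality yields $M_1/aM_1\cong\overline{M}_0$, where $\overline{M}=M/aM$. Since $\dim\overline{M}=1$, Lemma \ref{d=1} gives $\e_Q^1(\overline{M})=-\ell_A(\H_\m^0(\overline{M}))=-\hdeg_Q(M_1/aM_1)$, and by Nagata's formula \cite[(22.6)]{N} $\e_Q^1(M)=\e_Q^1(\overline{M})$. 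Hypothesis (2) then forces $\hdeg_Q(M_1/aM_1)=\hdeg_Q(M_1)$. Unpacking this equality through the filtration $\H_\m^0(M_1)\subseteq M_1$ should produce $\H_\m^0(\overline{M})\cap\overline{Q}\,\overline{M}=(0)$, whence the dimension-one identity $\chi_1(\overline{Q};\overline{M})=\ell_A(\H_\m^0(\overline{M}))-\ell_A(\H_\m^0(\overline{M})\cap\overline{Q}\,\overline{M})$ combined with Lemma \ref{/a} yields $\chi_1(Q;M)=\hdeg_Q(M_1)=\hdeg_Q(M)-\e_Q^0(M)$.

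For the inductive step $d\ge 3$, I would pick $a\in Q\setminus\m Q$ that is $M$--regular, superficial for $M$ and every $M_j$, with $\hdeg_Q(M_j/aM_j)\le\hdeg_Q(M_j)$ (Lemma \ref{superficial1}) and $\rmT_Q^1(\overline{M})\le\rmT_Q^1(M)$ (Lemma \ref{superficial2}). Mirroring the proof of Proposition \ref{e1}, hypothesis (2) forces the chain $\e_Q^1(\overline{M})\ge-\rmT_Q^1(\overline{M})\ge-\rmT_Q^1(M)$ to collapse to equalities, so $\e_Q^1(\overline{M})=-\rmT_Q^1(\overline{M})$. Provided the inductive hypothesis applies---directly or after a Lemma \ref{W}-style passage from $\overline{M}$ to $\overline{M}/\H_\m^0(\overline{M})$---one obtains $\chi_1(\overline{Q};\overline{M})=\hdeg_Q(\overline{M})-\e_Q^0(\overline{M})$; combining this with Lemma \ref{/a}, Proposition \ref{euler}, and the inequality $\hdeg_Q(\overline{M})\le\hdeg_Q(M)$ forces all intermediate inequalities to be equalities, proving (1). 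Conclusions (i)--(iii) then drop out of Theorem \ref{main1} automatically.

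The main obstacle is the transfer of the unmixedness hypothesis through reduction modulo $a$: \emph{a priori} $\overline{M}/\H_\m^0(\overline{M})$ can acquire associated primes of intermediate dimension, preventing direct application of the inductive hypothesis. I would circumvent this by choosing $a$ generically enough to avoid every non-maximal prime in $\bigcup_{j\ge 1}\Supp_A M_j$ in addition to the usual superficiality conditions, and then exploit the forced equalities $\hdeg_Q(M_j/aM_j)=\hdeg_Q(M_j)$ (which tightly control the action of $a$ on each $M_j$, and hence on $\H_\m^j(M)$) to rule out associated primes of intermediate dimension on $\overline{M}/\H_\m^0(\overline{M})$, so the induction closes.
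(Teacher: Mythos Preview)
Your overall structure (induction on $d$, superficial elements, collapsing the chain of inequalities from Proposition \ref{e1}) matches the paper's, and the implication $(1)\Rightarrow(2)$ together with conclusions (i)--(iii) do indeed fall out of Theorem \ref{main1} exactly as you say. But the core of $(2)\Rightarrow(1)$ has two genuine gaps that your outline does not close.

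First, the unmixedness obstacle you identify is real, and your proposed fix does not work. Choosing $a$ to avoid non-maximal primes of $\bigcup_j\Supp_A M_j$ does not force $\overline{M}/\H_\m^0(\overline{M})$ to be unmixed; the forced equalities you obtain are $\hdeg_Q(\overline{M}_j)=\hdeg_Q(M_j)+\hdeg_Q(M_{j+1})$ and $aM_j=(0)$, which control local cohomology but say nothing about associated primes of intermediate dimension. The paper sidesteps this entirely with a different device: by Proposition \ref{unmixed} one may assume $A$ Gorenstein and take an exact sequence $0\to M\to F\to X\to 0$ with $F$ free. Reducing mod $a$ gives $0\to[(0):_Xa]\to\overline{M}\to F/aF$, and the image $L$ of $\overline{M}$ in $F/aF$ is automatically unmixed (as a submodule of a free module over the Cohen--Macaulay ring $A/(a)$). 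One then applies the inductive hypothesis to $L$, not to $\overline{M}/\H_\m^0(\overline{M})$.

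Second, and more seriously, your ``Lemma \ref{W}-style passage'' hides the hardest step. Lemma \ref{W} requires \emph{both} the equality for $\overline{M}/\H_\m^0(\overline{M})$ \emph{and} $Q\overline{M}\cap\H_\m^0(\overline{M})=(0)$; you never establish the latter. (The same gap appears already in your $d=2$ argument: knowing $\hdeg_Q(M_1/aM_1)=\hdeg_Q(M_1)$, i.e.\ $a\H_\m^1(M)=0$, tells you about $a$, not about the remaining generator of $Q$.) In the paper this vanishing is the main technical content: one chooses a \emph{second} superficial element $b$, applies the induction to the analogous module $L'$ built from $b$, and uses Proposition \ref{d-seq} to produce a $d$-sequence on $L'$ starting with $a$. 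An element of $Q\overline{M}\cap U$ is then chased through both reductions, and the argument is completed via the claim $[a^2M:_Mb^2]\subseteq[a^2M:_Mb]$, which in turn relies on the embedding $M\hookrightarrow F$ and the vanishing $b\H_\m^1(M)=0$ obtained from the collapsed inequalities. None of this machinery is visible in your outline.

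For $d=2$ there is a quicker route you overlooked: by Corollary \ref{GNa} an unmixed $M$ of dimension $2$ is generalized Cohen--Macaulay, so $\e_Q^1(M)=-\ell_A(\H_\m^1(M))$ means $Q$ is standard by \cite[Theorem 2.1]{GO1}, and the equality follows at once.
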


To prove Theorem \ref{main2}, we need the following:

\begin{prop}\label{unmixed}$(${\cite[Theorem 2.5]{GhGHOPV2}}$)$
Let $M$ be a finitely generated $A$-module with $d = \dim_AM$.
Suppose that $M$ is unmixed.
Then there exist a surjective homomorphism $B \to A$ of rings such that $B$ is a Gorenstein complete local ring with $\dim B=\dim A$ and an exact sequence
$$0 \to M \to F \to X \to 0$$
of $B$-modules with $F$ finitely generated and free.
\end{prop}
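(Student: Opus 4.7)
The plan is to construct $B$ as a Gorenstein quotient of a regular presentation of $A$, and then to exploit the unmixedness of $M$ to embed it into a free $B$-module via the total ring of fractions. Since $A$ is $\fkm$-adically complete, Cohen's structure theorem gives a surjection $R \twoheadrightarrow A$ where $R$ is a regular (and hence Gorenstein) complete local ring. Because $R$ is Cohen-Macaulay and catenary, $\height_R I = \dim R - \dim A = \grade_R I$ for $I = \Ker(R \twoheadrightarrow A)$, so one can select an $R$-regular sequence $f_1, \ldots, f_c \in I$ of length $c = \dim R - \dim A$. The quotient $B = R/(f_1, \ldots, f_c)$ is then a Gorenstein complete local ring with $\dim B = \dim A$, and the natural map $B \twoheadrightarrow A$ is surjective. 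Via this map, $M$ becomes a finitely generated $B$-module whose $B$-associated primes are precisely the preimages of the $A$-associated primes of $M$.

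Next I would verify that $\Ass_B M \subseteq \Ass_B B$. Each $P \in \Ass_B M$ corresponds to some $\fkp \in \Ass_A M$ with $B/P \cong A/\fkp$, so $\dim B/P = \dim A/\fkp = d$ by unmixedness. In the setting where $d = \dim A = \dim B$, such a $P$ is a minimal prime of $B$; and since $B$ is Cohen-Macaulay, $\Ass_B B = \Min(B)$, whence the containment. Consequently, no nonzero element of $M$ is killed by a non-zero-divisor of $B$, so the localization map $M \to S^{-1}M$ is injective, where $S \subseteq B$ is the multiplicative set of non-zero-divisors.

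The last step is to embed $S^{-1}M$ into a free module and then clear denominators. The total ring of fractions $Q = S^{-1}B$ is Artinian and decomposes as $Q \cong \prod_{P \in \Min(B)} B_P$, each factor $B_P$ being an Artinian Gorenstein local ring, and therefore self-injective: the injective hull of its residue field equals $B_P$ itself. Hence every finitely generated module over $B_P$ embeds into a finite free $B_P$-module, and assembling such embeddings across the finitely many factors gives an injection $S^{-1}M \hookrightarrow Q^N$ for some $N$. Since $M$ is finitely generated, a single $s \in S$ will clear all denominators, so $sM \subseteq B^N$ inside $Q^N$; multiplication by $s$ is injective on $M$, yielding the desired embedding $M \hookrightarrow F := B^N$, with $X := F/M$ completing the sequence. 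The main obstacle is the first step: producing a Gorenstein $B$ with $\dim B = \dim A$ rests on the identity $\height_R I = \dim R - \dim A$, which is the equidimensionality of $A$ in the appropriate sense; unmixedness of $M$ of full dimension ensures that the relevant component structure is controlled, so that the regular sequence argument inside $I$ can be carried out.
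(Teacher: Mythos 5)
The paper states this proposition with a citation to \cite[Theorem 2.5]{GhGHOPV2} and gives no proof of its own, so there is nothing internal to compare against; I therefore review your argument on its own merits.

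Your proof is essentially correct. The construction of $B$ as a complete-intersection quotient $R/(f_1,\ldots,f_c)$ of a Cohen presentation is standard and gives a Gorenstein complete local ring with $\dim B = \dim A$ through which $R\twoheadrightarrow A$ factors. The identification $\Ass_B M = \{\pi^{-1}(\fkp) : \fkp\in\Ass_A M\}$, combined with unmixedness of $M$ at full dimension and $\Ass_B B = \Min(B)$ for the Cohen--Macaulay ring $B$, correctly yields $\Ass_B M\subseteq\Ass_B B$, hence the injectivity of $M\to S^{-1}M$. The total quotient ring $S^{-1}B$ is Artinian Gorenstein (a finite product of self-injective Artinian local rings), so every finitely generated $S^{-1}B$-module embeds in a finite free module; clearing denominators then produces $M\hookrightarrow B^N$. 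One remark: the final sentence of your write-up suggests that unmixedness of $M$ is needed for the identity $\height_R I = \dim R - \dim A$ and for the selection of the maximal regular sequence in $I$. That is not so. In a regular (hence catenary and equidimensional) local ring $R$ one always has $\height\fkp + \dim R/\fkp = \dim R$ for every prime $\fkp$, which forces $\height I = \dim R - \dim R/I$ for every ideal $I$, and $\grade_R I = \height_R I$ holds because $R$ is Cohen--Macaulay. No unmixedness hypothesis enters here; unmixedness of $M$ is used only later, to guarantee $\Ass_B M\subseteq\Min B$. This is a cosmetic issue and does not affect the validity of the argument. (For comparison, the route most often seen in the literature, and presumably the one in the cited source, is to show that the natural map $M\to\Hom_B(\Hom_B(M,B),B)$ is injective when $\Ass_B M\subseteq\Ass_B B$ and then dualize a free presentation of $\Hom_B(M,B)$; your localization argument sidesteps that bidual computation and is arguably more elementary.)
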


As a direct consequence we get the following.

\begin{cor}\label{GNa}$(${\cite[Lemma 3.1]{GNa}}$)$
Let $M$ be a finitely generated $A$-module with $d = \dim_AM\geq 2$. If $M$ is unmixed, then $\H_{\m}^1(M)$ is finitely generated.
\end{cor}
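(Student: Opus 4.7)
The plan is to derive Corollary~\ref{GNa} directly from Proposition~\ref{unmixed}, and no substantive difficulty is expected. First I would apply that proposition to obtain a surjective ring homomorphism $B \to A$ with $B$ a Gorenstein complete local ring of dimension $d$, together with a short exact sequence
$$0 \to M \to F \to X \to 0$$
of finitely generated $B$-modules in which $F$ is free. Denote the maximal ideal of $B$ by $\n$. Since $B\to A$ is a surjection of local rings, the image of $\n$ is $\m$, and hence for any $B$-module $Y$ the local cohomology $\H_\n^i(Y)$ agrees with $\H_\m^i(Y)$; in particular, for $M$, which is already an $A$-module, the computation over $B$ recovers the intended $A$-module local cohomology.

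Next I would exploit that $F$ is Cohen--Macaulay of depth $d$. Because $B$ is Gorenstein of dimension $d\geq 2$, any finitely generated free $B$-module satisfies $\depth_B F = d \geq 2$, so both $\H_\n^0(F) = 0$ and $\H_\n^1(F) = 0$. Feeding the short exact sequence above into the long exact sequence of local cohomology then collapses it to
$$0 \to \H_\n^0(X) \to \H_\n^1(M) \to 0,$$
producing an isomorphism $\H_\m^1(M) \cong \H_\n^0(X)$.

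To conclude, observe that $\H_\n^0(X)$ is the $\n$-torsion submodule of the finitely generated $B$-module $X$, so it is itself a finitely generated $B$-module (in fact of finite length, being annihilated by some power of $\n$). Transporting back along $B \to A$ yields that $\H_\m^1(M)$ is finitely generated, as claimed. The only point requiring minor care is the compatibility of local cohomology under the change of rings $B\to A$, which is immediate from $\sqrt{\n A} = \m$; there is no genuine obstacle, and the corollary is a clean bookkeeping consequence of the embedding of $M$ into a free module provided by Proposition~\ref{unmixed}.
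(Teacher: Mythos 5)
Your proof is correct and matches the intended derivation: the paper states Corollary~\ref{GNa} as a ``direct consequence'' of Proposition~\ref{unmixed}, and the argument you supply --- embed $M$ in a free $B$-module over a Gorenstein cover, kill $\H^0$ and $\H^1$ of the free module since $\depth F = d \ge 2$, read off $\H_\m^1(M)\cong\H_\n^0(X)$ from the long exact sequence, and note the latter is a finitely generated submodule of the Noetherian module $X$ --- is precisely the intended one.
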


Let $\Assh_AM=\{\p \in \Ass_AM \ | \ \dim A/\p=\dim_A M \}$ and let $(0)=\bigcap_{\p \in \Ass_AM}\operatorname{I}(\p)$ be a primary decomposition of $(0)$ in $M$, where for each $\p \in \Ass_AM$, $\operatorname{I}(\p)$ denotes a $\p$-primary submodule of $M$.
Set
$$\rmU_M(0)=\bigcap_{\p \in \Assh_AM}\operatorname{I}(\p)$$
and call it the unmixed component of $(0)$ in $M$.

We are now in a position to prove Theorem \ref{main2}.

\begin{proof}[Proof of Theorem \ref{main2}]
Thanks to Theorem \ref{main1}, we have only to show the implication $(2) \Rightarrow (1)$. We proceed by induction on $d$. Suppose that $d=2$. Then by Corollary \ref{GNa}, $M$ is a generalized Cohen-Macaulay $A$-module and
$$\e_Q^1(M)=-\rmT_Q^1(M)=-\hdeg_Q(M_1)=-\ell_A(\H_{\m}^1(M)).$$ Therefore $Q$ is a standard parameter ideal for $M$ by \cite[Theorem 2.1]{GO1} and the required equality $\chi_1(Q;M)=\hdeg_Q(M)-\e_Q^0(M)$ follows. Assume that $d \geq 3$ and that our assertion holds true for $d-1$. By Proposition \ref{unmixed} we may assume that $A$ is a Gorenstein local ring and that there exists an exact sequence
$$0 \to M \overset{\varphi}{\to} F \to X \to 0 \ \ \ \ (\sharp_4)$$
of $A$-modules with $F$ a finitely generated free $A$-module and $X =\operatorname{Coker} \varphi$.
Since the residue class field $A/\m$ of $A$ is infinite, we may now choose an element $a \in Q \backslash \m Q$ so that $a$ is superficial for $M$, $F$, $X$, and $M_j$ with respect to $Q$ and  $\hdeg_Q(M_j/aM_j) \leq \hdeg_Q(M_j)$ for all $1 \leq j \leq d-1$.
Set $\overline{M}=M/aM$ and $\overline{Q}=Q/(a)$.
Then, by the same argument as is in the proof of Lemma \ref{superficial1}, we have
$$ \ell_A([(0):_{M_j} a]) \leq \hdeg_Q(M_j)  \ \ \ \text{and}$$
$$ \hdeg_Q(\overline{M_j}) \leq \ell_A([(0):_{M_j} a]) +\hdeg_Q(M_{j+1}/aM_{j+1}) $$
for all $1 \leq j \leq d-2$.

We consider the exact sequence
$$ 0 \to [(0):_Xa] \to \overline{M} \overset{\overline{\varphi}}{\to} F/aF \to X/aX \to 0 $$
of $A$-modules obtained by  exact sequence $(\sharp_4)$,  where $\overline{\varphi}=A/(a) \otimes \varphi$.
Set $L=\Im \overline{\varphi}$. Then since $L$ is unmixed with $\dim_AL=d-1$ and $\ell_A([(0):_Xa]) < \infty$, we get $$ [(0):_Xa] \cong \H_{\m}^0(\overline{M})=\rmU_{\overline{M}}(0), $$
where $U=\rmU_{\overline{M}}(0)$ denotes the unmixed component of $(0)$ in $\overline{M}$. Consequently, because $a$ is superficial for $M$ with respect to $Q$ and $L \cong \overline{M}/U$ with $\ell_A(U) < \infty$, we see $\e_Q^i(M)=\e_Q^i(\overline{M})=\e_Q^i(L)$ for $i=0, 1$ and $\H_{\m}^j(L) \cong \H_{\m}^j(\overline{M})$ for all $j \geq 1$.
Hence $\hdeg_Q(L_j)=\hdeg_Q(\overline{M}_j)$ for all $1 \leq j \leq d-2$. Therefore 
\begin{eqnarray*}
\e_Q^1(M)=\e_Q^1(\overline{M})=\e_Q^1(L) &\geq& -\rmT_Q^1(L)\\
&=& -\sum_{j=1}^{d-2}\binom{d-3}{j-1}\hdeg_Q(L_j)\\
&=& -\sum_{j=1}^{d-2}\binom{d-3}{j-1}\hdeg_Q(\overline{M}_j)\\
&\geq& -\sum_{j=1}^{d-2}\binom{d-3}{j-1}\{ \ell_A([(0):_{M_j} a]) +\hdeg_Q(M_{j+1}/aM_{j+1}) \}\\
&\geq& -\sum_{j=1}^{d-2}\binom{d-3}{j-1}\{\hdeg_Q(M_j)+\hdeg_Q(M_{j+1})\}\\
&=& -\sum_{j=1}^{d-1}\binom{d-2}{j-1}\hdeg_Q(M_j)\\
&=& -\rmT_Q^1(M)=\e_Q^1(M),
\end{eqnarray*}
because $\e_Q^1(L) \geq - \rmT_{Q}^1(L)$ by Proposition \ref{e1}. Thus $\e_Q^1(L)=-\rmT_Q^1(L)$, $\hdeg_Q(\overline{M}_j)=\hdeg_Q(M_j)+\hdeg_Q(M_{j+1})$, and $aM_j=(0)$ for all $1 \leq j \leq d-2$, so that the hypothesis of induction on $d$ yields 
\begin{eqnarray*}
\chi_1(\overline{Q}:L)&=&\hdeg_{Q}(L)-\e_{Q}^0(L)\\
&=& \{\hdeg_{Q}(\overline{M})-\ell_A(U)\}-\e_{Q}^0(\overline{M})\\
&=& \sum_{j=0}^{d-2}\binom{d-2}{j}\hdeg_Q(\overline{M}_j) -\ell_A(\H_{\m}^0(\overline{M}))\\
&=& \sum_{j=0}^{d-2}\binom{d-2}{j}\{\hdeg_Q(M_j)+\hdeg_Q(M_{j+1})\} -\ell_A(\H_{\m}^0(\overline{M}))\\
&=& \sum_{j=0}^{d-1}\binom{d-1}{j}\hdeg_Q(M_j) -\ell_A(\H_{\m}^0(\overline{M}))\\
&=& \hdeg_Q(M)-\e_Q^0(M)-\ell_A(\H_{\m}^0(\overline{M})) \ \ \ \ \ \ \ \ \ \ \ \ \ (\dagger_1),
\end{eqnarray*}
because $\hdeg_Q(\overline{M})=\hdeg_Q(L)+\ell_A(U)$ by Lemma \ref{xyz} (2) and $\ell_A(U)=\ell_A(\H_{\m}^0(\overline{M}))$. We also have 
$$\ell_A(\overline{M}/Q\overline{M})=\ell_A(L/QL)+\ell_A(U/Q\overline{M} \cap U)\ \ \ \ \ \ \ \ \ \ \ \ \ (\dagger_2)$$
by the exact sequence
$$ 0 \to U/Q\overline{M} \cap U \to \overline{M}/Q\overline{M} \to L/QL \to 0 $$
obtained from the exact sequence $$ 0 \to U \to \overline{M} \to L \to 0.$$ Therefore by $(\dagger_1)$ and $(\dagger_2)$ we get 
\begin{eqnarray*}
\chi_1(Q;M)&=&\chi_1(\overline{Q};\overline{M})\\
&=&\ell_A(\overline{M}/Q \overline{M})-\e_{Q}^0(\overline{M})\\
&=& \{\ell_A(L/QL)+\ell_A(U/Q\overline{M} \cap U)\}-\e_{Q}^0(L)\\
&=& \chi_1(\overline{Q};L)+\ell_A(U/Q\overline{M} \cap U)\\
&=& \{ \hdeg_Q(M)-\e_Q^0(M)-\ell_A(\H_{\m}^0(\overline{M})) \}+\{\ell_A(U)-\ell_A(Q\overline{M} \cap U)\}\\
&=& \hdeg_Q(M)-\e_Q^0(M)-\ell_A(Q\overline{M} \cap U),
\end{eqnarray*}
because $\chi_1(Q;M)=\chi_1(\overline{Q};\overline{M})$ by Lemma \ref{/a} and $\ell_A(U)=\ell_A(\H_{\m}^0(\overline{M}))$. Thus, to prove $\chi_1(Q;M)=\hdeg_Q(M)-\e_Q^0(M)$, it is enough to show that $Q\overline{M} \cap U=(0)$.

Let us choose an element $b \in Q \backslash \m Q$ so that $b$ is  superficial for $M$, $F$, $X$, and $M_j$ with respect to $Q$, $\hdeg_Q(M_j/bM_j) \leq \hdeg_Q(M_i)$ for all $1 \leq j \leq d-1$, and $a$, $b$ forms a part of a minimal system of generators of $Q$.
We set $\overline{M}'=M/bM$, $\overline{A}'=A/(b)$, and $\overline{Q}'=Q/(b)$. Then, tensoring $(\sharp_4)$ by $\overline{A}'=A/(b)$, we get the exact sequence
$$ 0 \to [(0):_Xb] \to \overline{M}' \overset{\overline{\varphi}'}{\to} F/bF \to X/bX \to 0, $$ 
where $\overline{\varphi}'=A/(b) \otimes \varphi$. Set $L'=\Im \overline{\varphi}'$. Then because $L'$ is unmixed with $\dim_AL'=d-1$ and $\ell_A([(0):_X b]) < \infty$, we have
$$ [(0):_X b] \cong \H_{\m}^0(\overline{M}')=\rmU_{\overline{M}'}(0),$$
where $U'=\rmU_{\overline{M}'}(0)$ is the unmixed component of $(0)$ in $\overline{M}'$. Consequently by the same argument as above, $\e_Q^1(L')=-\rmT_Q^1(L')$ and $bM_i=(0)$ for all $1 \leq i \leq d-2$, so that thanks to the hypothesis of induction on $d$, we get $\chi_1(\overline{Q}';L')=\hdeg_{\overline{Q}'}(L')-\e_{\overline{Q}'}^0(L')$.

We now choose the element $a \in Q \setminus \m Q$ to be superficial also for $L'$ with respect to $\overline{Q}'$ and $\hdeg_{\overline{Q}'}(L'/aL') \leq \hdeg_{\overline{Q}'}(L')$.
Then by Proposition \ref{d-seq} there exist elements $a_3,a_4,\ldots,a_d \in A$ such that $\overline{Q}'=(a,a_3,\ldots,a_d)\overline{A}'$ and $a,a_3,a_4,\ldots,a_d$ forms a $d$-sequence on $L'$, because $\chi_1(\overline{Q}';L')=\hdeg_{\overline{Q}'}(L')-\e_{\overline{Q}'}^0(L')$. Take $\alpha \in Q\overline{M} \cap U$ and write $\alpha=\overline{x}$ with $x \in QM \cap \rmU(aM)$, where $$\rmU(N)=\bigcup_{\ell>0}[N:_M \m^{\ell}]$$ for each submodule $N$ of $M$ and $\overline{x}$ denotes the image of $x$ in $\overline{M}$. Let us  consider the composite of the canonical maps
$$ \rho:M \to \overline{M}' \to L' \to L'/aL'.$$ Then $\m^{\ell}x \subseteq aM$ for all $\ell \gg 0$ and $x \in QM$.  Therefore 
$$ \rho(x) \in \H_{\m}^0(L'/aL') \cap (a,a_3,\ldots,a_d)(L'/aL')=(0),$$
because $a,a_3,a_4,\ldots,a_d$ forms a $d$-sequence on $L'$. Consequently, $x \in aM+\rmU(bM) \cap \rmU(aM)$.
Let us write $x=y+z$ with $y \in aM$ and $z \in \rmU(bM) \cap \rmU(aM)$. Then because $a$ and $b$ are $M$-regular, we have the embeddings $$ \rmU(aM)/aM =U=\H_{\m}^0(\overline{M}) \hookrightarrow \H_{\m}^1(M),$$
$$ \rmU(bM)/bM =U'=\H_{\m}^0(\overline{M}') \hookrightarrow \H_{\m}^1(M),$$ so that $b\rmU(aM) \subseteq aM$ and $a\rmU(bM) \subseteq bM$, since $a\H_{\m}^1(M)=b\H_{\m}^1(M)=(0)$.
Therefore  $ az \in bM$ and $bz \in aM$.
We now write 
$$ az = bv \ \ \ \mbox{and} \ \ \ bz =aw$$
with $v,w \in M$. 
Then $v \in [a^2M:_Mb^2]$, since $abz=b^2v=a^2w$.

\begin{claim}\label{claim}
$[a^2M:_Mb^2] \subseteq [a^2M:_M b]$.
\end{claim}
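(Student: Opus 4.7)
The plan is to use that $A$ is Gorenstein (hence Cohen--Macaulay) to force $a^2, b^2$ to be a regular sequence on the free module $F$, and then to push the resulting preimage of $v$ through the quotient $X = F/M$, controlling it by local cohomology information already accumulated in the induction.

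First, I would take $v \in [a^2 M :_M b^2]$ and write $b^2 v = a^2 w$ with $w \in M$. Since $a, b$ is part of a system of parameters of the Cohen--Macaulay ring $A$, the pair $a^2, b^2$ is $A$-regular, and hence regular on the free $A$-module $F$. This forces $v = a^2 g$ for some $g \in F$. Letting $\bar g$ denote the image of $g$ in $X = F/M$, the condition $v \in M$ becomes $a^2 \bar g = 0$, that is, $\bar g \in [(0):_X a^2]$. The claim will follow as soon as I show $b \bar g = 0$, because then $bg \in M$ and $bv = a^2(bg) \in a^2 M$, giving $v \in [a^2M:_M b]$.

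To identify where $\bar g$ lives, I would apply the long exact sequence of local cohomology to $0 \to M \to F \to X \to 0$. Since $F$ is free of dimension $d \ge 3$, $\H_\m^0(F) = \H_\m^1(F) = 0$, and so $\H_\m^0(X) \cong \H_\m^1(M)$. The induction has already established $aM_j = (0)$ for $1 \le j \le d-2$, which by Matlis duality means $a \H_\m^j(M) = 0$ for $1 \le j \le d-2$; in particular $a \H_\m^1(M) = 0$, hence $a \H_\m^0(X) = 0$. Combined with $a$ being superficial for $X$ (so that $a$ acts regularly on $X/\H_\m^0(X)$), this yields $[(0):_X a^n] = \H_\m^0(X)$ for every $n \ge 1$. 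In particular, $\bar g \in \H_\m^0(X) \cong \H_\m^1(M)$. The symmetric construction for $b$ carried out earlier in the proof gives $b \H_\m^1(M) = 0$ as well, and so $b \bar g = 0$, completing the proof.

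The only slightly subtle point is the jump from $[(0):_X a]$ to $[(0):_X a^2]$: one has to use the identification $\H_\m^0(X) \cong \H_\m^1(M)$ together with the induction's output $a\H_\m^1(M) = 0$, rather than hoping for regularity of $a$ on $X$, which need not hold.
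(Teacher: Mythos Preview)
Your proof is correct and follows essentially the same route as the paper's: both arguments use that $a^2,b^2$ is a regular sequence on the free module $F$, then land the obstruction in $\H_\m^1(M)$ and kill it with the already-established fact $b\H_\m^1(M)=(0)$. The only cosmetic difference is that the paper tensors $0\to M\to F\to X\to 0$ by $A/(a^2)$ and identifies $\ker(M/a^2M\to F/a^2F)$ with $\H_\m^0(M/a^2M)\hookrightarrow \H_\m^1(M)$, whereas you solve the Koszul syzygy $b^2v=a^2w$ directly in $F$ and work with the image $\bar g\in (0):_Xa^2\subseteq \H_\m^0(X)\cong \H_\m^1(M)$; these are the same computation viewed from the two ends of the snake map.
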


\noindent
{\it{Proof of Claim \ref{claim}.}}
Tensoring exact sequence $(\sharp_4)$ by $A/(a^2)$, we get the exact sequence
$$ 0 \to [(0):_X a^2] \to M/a^2M \overset{\widetilde{\varphi}}{\to} F/a^2F \to X/a^2X \to 0,$$
where $\widetilde{\varphi}=A/(a^2) \otimes \varphi$. Since $\ell_A([(0):_Xa]) < \infty$, $[(0):_X a]_{\p}=(0)$ for all $\p \in \Spec A \backslash \{\m\}$.
Hence $a$ is $X_{\p}$-regular, so that $\ell_A([(0):_Ma^2])< \infty$.
Therefore because $\depth_AF/a^2F >0$, we get an isomorphism $$[(0):_X a^2] \cong \H_{\m}^0(M/a^2M).$$
Take $\xi \in [a^2M:_Mb^2]$ and let $\overline{\xi}$ denotes the image of $\xi$ in $M/a^2M$.
Then $b^2\widetilde{\varphi}(\overline{\xi})=\widetilde{\varphi}(b^2\overline{\xi})=0$ in $F/a^2F$, whence $\widetilde{\varphi}(\overline{\xi})=0$, because $a^2,b^2$ forms an $F$-regular sequence.
Therefore $$\overline{\xi} \in \ker \widetilde{\varphi} \cong \H_{\m}^0(M/a^2M) \hookrightarrow \H_{\m}^1(M)$$
and hence $b \overline{\xi}=0$ in $M/a^2M$, because $b \H_{\m}^1(M)=(0)$.
Thus $b\xi \in a^2M$, so that $\xi \in [a^2M:_M b]$.
Consequently $[a^2M:_Mb^2] \subseteq [a^2M:_M b]$, which proves Claim \ref{claim}.\\

We have $v \in [a^2M:_M b]$ by Claim \ref{claim}.
Hence $bv \in a^2M$. We write $bv=a^2v'$ with $v' \in M$.
Then $z=av' \in aM$, since $abz=b^2v=a^2bv'$ and $\depth_AM >0$. Therefore $x=y+z \in aM$, so that $\alpha=\overline{x}=0$ in $\overline{M}$. Thus $Q\overline{M} \cap U=(0)$. Hence the required equality $\chi_1(Q;M)=\hdeg_Q(M)-\e_Q^0(M)$ follows, which completes the proof of Theorem \ref{main2} as well as the proof of the implication $(2) \Rightarrow (1)$.
\end{proof}

The following example shows that the implication $(2) \Rightarrow (1)$ does not hold true in general, unless $M$ is unmixed.

\begin{ex}\label{ex2}
Let $S$ be a complete regular local ring with maximal ideal $\n$, $\dim S = 3$, and infinite residue class field. 
Let $\fkn = (X,Y,Z)$ and $\ell \geq 1$ be integers.
We set $$A=S/(X) \cap (Y^{\ell}, Z).$$ Let $\m=(x,y,z)A$ be the maximal ideal of $A$ and $Q=(x-y,x-z)A$, where $x$, $y$, and $z$ denote the images of $X$, $Y$, and $Z$ in $A$, respectively. Then, since $\m^{\ell+1}=Q\m^{\ell}$, $Q$ is a reduction of $\m$. We furthermore have the following:

\begin{itemize}
\item[$(1)$] $A$ is mixed with $\dim A=2$ and $\depth A=1$,
\item[$(2)$] $\e_Q^0(A)=1$, $\e_Q^1(A)=-\ell$, and $\e_Q^2(A)=-\binom{\ell}{2}$,
\item[$(3)$] $\chi_1(Q;A)=1$, $\hdeg_Q(A)=\ell+1$, and $\rmT_Q^1(A)=\ell$.
\item[$(4)$] Hence $\e_Q^1(A)=-\rmT_Q^1(A)$, and if $\ell=1$, $\chi_1(Q;A)= \hdeg_Q(A)-\e_Q^0(A)$, but if $\ell \geq 2$, $\chi_1(Q;A)< \hdeg_Q(A)-\e_Q^0(A)$.
\end{itemize}
\end{ex}

\begin{proof}
Consider the canonical exact sequence
$$ 0 \to xA \to A \to A/xA \to 0.$$
Set $\a=(y^{\ell},z)A$. Then $U=xA ~(\cong A/\a$) is the unmixed component of $(0)$ in $A$.
Set $B=A/xA$.
Then since $B$ is a regular local ring with $\dim B=2$ and $QB=\m B$, we have
\begin{eqnarray*}
\ell_A(A/Q^{n+1}) &=& \ell_A(B/\m^{n+1}B)+\ell_A(U/Q^{n+1}U)\\
&=& \binom{n+2}{2}+\left[ \e_Q^0(U)\binom{n+1}{1}-\e_Q^1(U) \right]
\end{eqnarray*}
for all $n \gg 0$.

Because the Hilbert series $\rmH(\gr_\m(A/\a), \lambda)$ of the associated graded ring $\gr_\m(A/\a)$ is given by
$$
\rmH(\gr_\m(A/\a) , \lambda) = \frac{1+\lambda + \cdots +\lambda^{\ell-1}}{1-\lambda}
$$
and $Q{\cdot}(A/\a)=\m{\cdot}(A/\a)$, we have $\e_Q^0(U)=\e_{\m}^0(A/\a) = \ell$ and $\e_Q^1(U)=\e_\m^1(A/\a) = \binom{\ell}{2}$.
Therefore
$$
(-1)^i\e_Q^i(A) = \left\{
\begin{array}{lc}
1 & \mbox{if $i = 0$},\\
\vspace{1mm}\\
\rme_Q^0(U) = \ell & \mbox{if $i = 1$},\\
\vspace{1mm}\\
-\rme_Q^1(U) = -\binom{\ell}{2} & \mbox{if $i = 2$}.
\end{array}
\right.
$$ On the other hand, since  $A/\a$ is a Gorenstein ring and
$$
\rmH_\fkm^1(A) \cong \rmH_\m^1(A/\a),
$$
we get 
$$\hdeg_Q(A_1) = \hdeg_Q(A/\a)=\e_Q^0(A/\a)=\e_{\m}^0(A/\a) = \ell.$$
Therefore 
$$ \chi_1(Q;A) =\ell_A(A/Q)-\e_Q^0(A)=2-1=1, $$
$$ \hdeg_Q(A)=\e_Q^0(A)+\hdeg_Q(A_1)=1+\ell, \ \ \  \mbox{and}$$
$$ \rmT_Q^1(A)=\hdeg_Q(A_1)=\ell,$$
since $\ell_A(A/Q)=2$.
\end{proof}


\end{document}